\newtheorem{theorem}{Theorem}[section]
\newtheorem{lemma}[theorem]{Lemma}
\newtheorem{definition}[theorem]{Definition}
\newtheorem{proposition}[theorem]{Proposition}
\newtheorem{remark}[theorem]{Remark}
\newtheorem{corollary}[theorem]{Corollary}
\newcommand{\cali}[1]{\mathscr{#1}}
\numberwithin{equation}{section}
\newcommand{\dist}{{\rm dist}}
\newcommand{\ddc}{{dd^c}}
\newcommand{\supp}{{\rm supp}}
\newcommand{\Cc}{\cali{C}}
\newcommand{\Dc}{\cali{D}}
\newcommand{\Ic}{\cali{I}}
\newcommand{\Jc}{\cali{J}}
\newcommand{\Rc}{\cali{R}}
\newcommand{\Pb}{\mathbb{P}}
\newcommand{\Cb}{\mathbb{C}}
\title{On chain recurrence classes of endomorphisms of $\Pb^k$}
\author{Johan Taflin}
\begin{document}
\maketitle

\begin{abstract}
We prove that the minimal chain recurrence classes of a holomorphic endomorphism of $\Pb^k$ have finitely many connected components. We also obtain results on arbitrary classes. These strong constraints on the topological dynamics in the phase space are all deduced from the associated action on a space of currents.
\end{abstract}
Key words : Chain recurrent set, attractors.\\

MSC 2010 : 32H50, 37F10.

\section{Introduction}
The {\em chain recurrent set} is an important invariant set in topological dynamics which captures a large part of the non-trivial dynamics. For polynomial automorphisms of $\Cb^2,$ this set has been very well described by Bedford-Smillie in \cite{bs-2}. Our aim here is to initiate its study for holomorphic endomorphisms of $\Pb^k$ and to understand what kinds of restrictions the holomorphic assumption implies in that setting.

Let $f\colon\Pb^k\to\Pb^k$ be a holomorphic endomorphism. Recall that, following Conley \cite{conley}, for $\epsilon>0$ a sequence $(x_i)_{0\leq i\leq n}$ is called an \emph{$\epsilon$-pseudo-orbit} between $x$ and $y$ in $\Pb^k$ if $n\geq1,$ $x_0=x,$ $x_n=y$ and for all $0\leq i<n,$ $\dist(f(x_i),x_{i+1})<\epsilon.$ We say that $x\succ y$ if for all $\epsilon>0$ there exists an $\epsilon$-pseudo-orbit  between $x$ and $y.$ Then, the chain recurrent set is defined by
$$\Rc(f):=\{x\in \Pb^k\, |\, x\succ x\}.$$
This is a closed invariant set that contains the non-wandering set and thus all the periodic orbits. Moreover, $\succ$ is a preorder on $\Rc(f)$ and the equivalence classes associated to it are called the \emph{chain recurrence classes}: if $x\in\Rc(f)$ then its chain recurrence class $[x]$ consists of all $y\in\Rc(f)$ such that $x\succ y$ and $y\succ x.$ The relation $\succ$ becomes an order on the classes by saying that $[x]\succ[y]$ if $x\succ y.$

Our main result is the following theorem about minimal classes.
\begin{theorem}\label{th-main}
Let $f\colon\Pb^k\to\Pb^k$ be a holomorphic endomorphism of algebraic degree $d.$ If $K$ is a chain recurrence class for $f$ which is minimal with respect to $\succ$ then $ K$ has finitely many connected components.
\end{theorem}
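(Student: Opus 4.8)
The plan is to connect the topological dynamics in the phase space to the action of $f$ on positive closed currents. The key principle, which should come from the theory developed by Sibony and others, is that a minimal chain recurrence class behaves like an \emph{attractor}: since $K$ is minimal for $\succ$, there should be no class strictly below it, so the forward dynamics of points near $K$ must accumulate on $K$. More precisely, I expect one can produce a trapping region, i.e. an open neighborhood $U$ of $K$ with $\overline{f(U)}\subset U$, such that $K=\bigcap_{n\geq0}f^n(U)$. The existence of such a fundamental system of trapping neighborhoods for a minimal class is exactly the kind of statement Conley's theory guarantees once one knows that the class is at the bottom of the order.

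Given such a trapping region $U$, the central idea is to attach to $K$ a natural invariant current and use the cohomological rigidity of $\Pb^k$. On a trapping region the normalized pullbacks $d^{-pn}(f^n)^*\omega^p$ (for a suitable bidegree $p$) should converge, after the dynamics has been confined, to a positive closed current $T$ supported on $K$, or on a set closely related to it. The crucial structural input is that any positive closed current on $\Pb^k$ has a well-defined cohomology class, and these classes live in the finite-dimensional space $H^{p,p}(\Pb^k,\Rb)\cong\Rb$. This finiteness of cohomology is what I expect to ultimately bound the number of connected components: each connected component of $K$ should carry a nonzero amount of the mass of $T$, and if there were infinitely many components one could extract a current whose cohomology class would have to vanish or contradict the normalization, since the total mass is fixed by the cohomology class.

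The key steps, in order, would be: (1) show that minimality of $K$ forces $K$ to be an attractor with a trapping neighborhood $U$ and $K=\bigcap_n f^n(U)$; (2) construct the invariant current $T$ associated to the attractor by iterating $f^*$ (or $f_*$) on a suitable smooth form and controlling the convergence inside $U$; (3) relate the support and the mass distribution of $T$ to the connected components of $K$, showing in particular that $T$ sees each component with positive mass; (4) invoke the one-dimensionality of the relevant cohomology group $H^{p,p}(\Pb^k)$ to conclude that only finitely many components can each carry a definite share of a fixed total mass. Finiteness then follows by a mass or volume estimate: each component, being a connected piece of the attractor, should trap a uniformly positive amount of the normalized current's mass, and the total is finite.

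I expect the main obstacle to be step (3): converting the analytic information carried by $T$ (its cohomology class and total mass) into a genuine \emph{lower} bound on the contribution of each connected component of $K$. It is not automatic that every component carries positive mass of $T$, nor that these contributions are bounded below uniformly; a component could a priori be ``invisible'' to the current. Overcoming this likely requires a careful analysis of how $f$ permutes or mixes the connected components of $K$—since $f$ is continuous and $K$ is invariant, $f$ induces a map on the (possibly infinite) set of components—and then using the minimality of $K$ to show this induced action is transitive or at least has finitely many orbits, after which a single mass estimate on one representative component propagates to all of them. The interplay between the combinatorics of how components are permuted and the cohomological mass bound is, I think, where the real work lies.
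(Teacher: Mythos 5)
There is a genuine gap, and it sits exactly at your step (1). Minimality of $K$ for $\succ$ makes $K$ a \emph{quasi-attractor}, i.e.\ a decreasing intersection of attracting sets $A_{x,\epsilon}=\bigcap_{n\geq0}f^n(U_{x,\epsilon})$, but it does \emph{not} make $K$ an attracting set: in general there is no single trapping region $U$ with $K=\bigcap_{n\geq0}f^n(U)$, and Conley theory cannot produce one. This distinction is the entire content of the theorem. An attracting set has finitely many connected components for elementary topological reasons (one may shrink the trapping region to the finitely many of its components meeting the attracting set, and a nested intersection of compacta with at most $m$ components has at most $m$ components), so if your step (1) held, the theorem would follow at once with no currents at all. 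Conversely, the smooth counterexamples cited in the paper show that step (1) cannot follow from topological dynamics alone: for diffeomorphisms a minimal chain recurrence class can be an adding machine, a Cantor set, which admits no such trapping region. Any correct proof must use holomorphy precisely at this point, namely to prevent the nested attracting sets $A_{x,\epsilon}$ from developing more and more components as $\epsilon\to0$; assuming an attractor structure at the outset begs the question.

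Your steps (3)--(4) would also fail as written, for the reason you yourself flag: the isomorphism $H^{p,p}(\Pb^k,\Rb)\cong\Rb$ fixes the \emph{total} mass of an invariant current but gives no per-component lower bound; components of the support can carry arbitrarily small or even zero mass (an atomless measure on a Cantor set is the extreme case: every component of its support is a single point of mass zero), so no counting argument can get started. The paper's mechanism is different. It takes the attracting current $\tau$ of a suitable iterate $f^{n_0}$ (Corollary \ref{co-qa}) and proves that $\supp(\tau)$ is \emph{connected} by a convexity argument (Theorem \ref{th-tau-connexe}): if $\chi$ is a cutoff equal to $1$ near a compact proper piece $X=\Omega\cap\supp(\tau)$ and $0<\|\chi\tau\|<1$, then the currents $S_n:=(\chi\circ f^n)\tau/\|\chi\tau\|$ are closed (invariance of $\supp(\tau)$ and openness of $f^n$ force $\chi\circ f^n\equiv1$ near each point of $\supp(S_n)$) and satisfy $\Lambda^nS_n=S_0$, hence $S_0\in\Dc_p(U)$; writing $\tau=\|\chi\tau\|S_0+\|(1-\chi)\tau\|R_0$ then contradicts the extremality of $\tau$ in $\Dc_p(U)$ given by Theorem \ref{th-finite}. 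Connectedness then passes from $\supp(\tau)$ to $K$ by minimality: the connected component of each attracting set $A_i\supset K$ containing $\supp(\tau)$ is again an attracting set, their decreasing intersection is a quasi-attractor contained in $K$ and containing $\supp(\tau)$, hence equals $K$, and it is connected as a nested intersection of connected compacta. Your closing intuition that $f$ permutes pieces of $K$ does survive—it is the $n_0$-splitting of Corollary \ref{co-qa}—but the finiteness of that splitting comes from the finiteness of the set of attracting currents, not from cohomology, and the hard step (each piece is connected) is carried by extremality, not by combinatorics.
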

In dimension $k=1,$ this result is a direct consequence of Montel's theorem. For $k=2,$ Forn{\ae}ss-Weickert \cite{fw-attractor} proved the stronger fact that a minimal chain recurrence class (referred to as {\em attractor} there, following Ruelle's terminology \cite{ruelle-book}) is either connected or an attracting periodic orbit (simply called {\em sink} in what follows). However, their main argument, which relies on pseudoconvexity, is specific to dimension $2.$ Our approach uses the dynamics induced by $f$ on some spaces of {\em currents} (see Section \ref{sec-current} for more details). Indeed, Theorem \ref{th-main} is an unexpected consequence of \cite{t-attractor} where other results on minimal chain recurrence classes were obtained.
\begin{theorem}[\cite{t-attractor}]\label{th-rappel}
Let $f$ be as in Theorem \ref{th-main}. The set of minimal chain recurrence classes of $f$ is at most countable. And if $K$ is such a class then there exist two integers $s\in\{0,1,\ldots, k\}$ and $n_0\geq1$ such that
\begin{itemize}
\item the topological entropy of $f_{|K}$ is $s\log d,$
\item $K$ is the disjoint union of minimal chain recurrence classes $K_1,\ldots,K_{n_0}$ for $f^{n_0},$
\item on each $K_i,$ there exists a measures $\nu_i$ which is mixing for $f^{n_0},$ of maximal entropy $s\log(d^{n_0})$ on $K_i$ and with at least $s$ positive Lyapunov exponents.
\end{itemize}
Moreover, if $s=0$ (i.e. if $f_{|K}$ has zero topological entropy) then $K$ is a sink.
\end{theorem}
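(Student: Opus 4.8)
The plan is to recognize the minimal classes as the attractors of $f$ in the sense of Conley theory and then to extract their measurable structure from the induced action on positive closed currents.

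First I would note that a class $K$ which is minimal for $\succ$ is a terminal element of the order on classes, and this is precisely the defining property of an attractor: there is an open trapping region $U$ with $\overline{f(U)}\subset U$ and $K=\bigcap_{n\ge 0}f^n(\overline{U})$, every orbit entering $U$ converging to $K$. The countability of the set of minimal classes is the soft part of the statement: it follows from the Conley-theoretic fact that, in a second countable space, each attractor is the maximal invariant set of a trapping region that may be taken from a fixed countable basis, so the minimal classes inject into a countable family.

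Next I would attach to $K$ the integer $s$ through the push-forward dynamics on currents. For $\Theta$ a smooth positive closed form of bidegree $(k-s,k-s)$ the operator $(f^n)_*$ multiplies the class of $\Theta$ by $d^{ns}$, so the normalized push-forwards $d^{-ns}(f^n)_*\Theta$ have bounded mass and concentrate on $K$; I let $s$ be the largest integer for which such a limit, the attracting current $\tau$ of bidegree $(k-s,k-s)$ supported on $K$, is nontrivial. Writing $T$ for the Green current of $f$, the natural candidate equilibrium measure on $K$ is the normalized intersection $\nu:=T^s\wedge\tau$, of bidegree $(k,k)$. Giving a meaning to this wedge product and to the convergence producing $\tau$ is the technical core of the argument, and it is exactly here that the holomorphic hypothesis enters: one appeals to the Dinh--Sibony calculus of super-potentials, which controls intersections of weakly regular positive closed currents and shows that $\nu$ is a well-defined $f$-invariant probability measure carried by $K$.

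The remaining assertions are then read off from $\nu$. For the upper bound $h_{\mathrm{top}}(f_{|K})\le s\log d$ I would use a Gromov--Yomdin type estimate bounding the entropy of $f_{|K}$ by the exponential growth rate of currents supported near $K$, which is $d^s$ by the choice of $s$. For the matching lower bound, and for the Lyapunov exponents, I would invoke de Th\'elin's inequalities for intersections with powers of the Green current: the measure $T^s\wedge\tau$ has metric entropy at least $s\log d$ and at least $s$ Lyapunov exponents bounded below by $\tfrac12\log d$, hence strictly positive. Combined with the variational principle this gives $h_{\mathrm{top}}(f_{|K})=s\log d$ and exhibits $\nu$ as a measure of maximal entropy. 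The measure $\nu$ need not be mixing under $f$ itself; it decomposes into a cycle of ergodic components permuted by $f$, and taking $n_0$ to be the length of this cycle yields the splitting $K=K_1\sqcup\cdots\sqcup K_{n_0}$ into minimal classes of $f^{n_0}$, on each of which the corresponding component $\nu_i$ is mixing of entropy $s\log(d^{n_0})$. Finally, the case $s=0$ forces $h_{\mathrm{top}}(f_{|K})=0$: the attracting current $\tau$ is itself a measure, of zero entropy and with no positive exponent, so the dynamics on $K$ carries no expansion, and a normal-families argument on the trapping region shows that the iterates of $f$ are equicontinuous on $U$ and that $K$ reduces to a single attracting periodic orbit, i.e. a sink. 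The main obstacle throughout is that, unlike $T$, the attracting current $\tau$ is produced by push-forward and is only weakly regular; all the entropy and exponent estimates therefore hinge on the super-potential calculus being robust enough to define and control $T^s\wedge\tau$ and to compare $\nu$ with the dynamical degrees of $f_{|K}$.
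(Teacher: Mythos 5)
Your opening identification is where the argument breaks. A chain recurrence class that is minimal for $\succ$ is a minimal \emph{quasi-attractor}, i.e.\ a decreasing intersection $A_x=\bigcap_{\epsilon>0}A_{x,\epsilon}$ of attracting sets, and in general it is \emph{not} an attractor: there need not exist a single trapping region $U$ with $K=\bigcap_{n\geq0}f^n(U)$, nor do orbits entering a neighborhood converge to $K$ itself. This is not cosmetic, because your ``soft'' countability argument rests on it. Attractors are indeed at most countable in any system with a countable basis, but quasi-attractors arise as intersections of arbitrary decreasing sequences of attractors and can form an uncountable family; the paper itself recalls \cite{bonatti-diaz-maximal} that smooth diffeomorphisms may have uncountably many minimal chain recurrence classes, so if countability followed from Conley theory alone it would hold there too. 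In \cite{t-attractor} countability is instead a consequence of the genuinely holomorphic fact that each trapping region supports only \emph{finitely many} attracting currents (Theorem \ref{th-finite}), applied to a countable family of trapping regions; it belongs to the hard part of the theorem, not the soft part.

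You also locate the technical core in the wrong place. Defining $\nu:=T^s\wedge\tau$ is not delicate: the Green current $T$ has H\"older continuous local potentials, so wedging $T^s$ with any positive closed current is classical Bedford--Taylor theory; no super-potential calculus is needed for that step. What actually carries the statement is the convergence theory for $\Lambda=d^{-s}f_*$ on $\Cc_p(U)$: existence of attracting currents only in the Ces\`aro sense, the integer $n_0$ after which Ces\`aro means can be dropped for $f^{n_0}$, extremality in $\Dc_p(U)$, and above all finiteness. Your sketch assumes $d^{-ns}(f^n)_*\Theta$ converges to a single $\tau$, which fails without passing to the iterate (a period-two sink already obstructs it, cf.\ Remark \ref{rk-ext}), and your derivation of the splitting $K=K_1\sqcup\cdots\sqcup K_{n_0}$ from the cycle of ergodic components of $\nu$ produces supports of measures, not the assertion that each $K_i$ is a minimal chain recurrence class of $f^{n_0}$; in the paper this structure comes from Corollary \ref{co-qa}, with the mixing, maximal entropy and Lyapunov bounds of each $\nu_i$ supplied by Corollary \ref{co-measure} (via de Th\'elin-type inequalities, which you correctly invoke). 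Your treatment of $s=0$ is in the right spirit, but as written the proposal establishes the theorem only for genuine attractors, and the passage from attractors to minimal quasi-attractors is precisely the content of \cite{t-attractor} that it bypasses.
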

What we prove in Theorem \ref{th-main} is that the sets $K_i$ are connected. Let us make some remarks about these two theorems. First, without the minimality assumption Theorem \ref{th-main} could be false. Indeed, the polynomial map $f(z)=z^2+1$ can be seen as an endomorphism of $\Pb^1$ and its Julia set is a chain recurrence class which is homeomorphic to a Cantor set. On the other hand, none of the results in Theorem \ref{th-main} and Theorem \ref{th-rappel} holds in smooth dynamics. There exist diffeomorphisms with uncountably many minimal chain recurrence classes and other ones $g$ with minimal classes which are {\em adding machine}, i.e. invariant set $K$ such that
\begin{itemize}
\item $K$ is a Cantor set,
\item the topological entropy on $K$ vanishes,
\item for each $n\geq1,$ $g^n_{|K}$ does not admits a mixing invariant measure.
\end{itemize}
See \cite{bonatti-diaz-maximal} for the abundance of diffeomorphisms with both these properties in the $C^1$-topology and \cite{gambaudo-vanstrien-tresser} for examples of analytic diffeomorphisms with the later property.

Observe that \cite{t-attractor} deals with ``minimal quasi-attractors'' which is synonymous to ``minimal chain recurrence classes'' (see Section \ref{sec-crs}). Here, we emphasize on the chain recurrent set as our goal in what follows is to give other results on non-minimal classes.

An easy consequence of classical equidistribution results is that the set of chain recurrence classes in $\Pb^k$ always admits a unique maximal element. Indeed, the dynamics on the support of an ergodic measure is topologically transitive so this support is included in a unique chain recurrence class. Moreover, an endomorphism $f$ of degree $d\geq2$ always admits a unique measure of maximal entropy $\mu$ \cite{briend-duval-carac}, called the {\em equilibrium measure}, and Forn{\ae}ss-Sibony \cite{fs-cdhd2} proved that Lebesgue almost every $y\in\Pb^k$ satisfies
$$\lim_{n\to\infty}\frac{1}{d^{kn}}\sum_{f^n(a)=y}\delta_a=\mu.$$
This implies directly the following result.
\begin{proposition}\label{prop-max}
If $[\mu]$ denotes the chain recurrence class associated to the equilibrium measure $\mu$ then every chain recurrence class $K$ satisfies $[\mu]\succ K.$
\end{proposition}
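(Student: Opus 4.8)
The plan is to use directly the Fornæss--Sibony equidistribution of preimages stated just above, which asserts that for Lebesgue-almost every $y\in\Pb^k$ the normalized preimage measures $d^{-kn}\sum_{f^n(a)=y}\delta_a$ converge weakly to $\mu$. I fix a point $p$ in the support of $\mu$; since the dynamics on $\supp\mu$ is transitive, $p$ belongs to the class $[\mu]$, so it suffices to show that $p\succ x$ for every $x$ lying in an arbitrary chain recurrence class $K$. Thus I fix such an $x$ together with an $\epsilon>0$ and construct an $\epsilon$-pseudo-orbit from $p$ to $x$.

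First I would use the continuity of $f$ to choose a radius $\delta>0$ small enough that $\dist(f(a),f(p))<\epsilon$ whenever $\dist(a,p)<\delta$. Next, since Lebesgue-almost every point satisfies the equidistribution property and the ball $B(x,\epsilon)$ has positive Lebesgue measure, I can select a point $y\in B(x,\epsilon)$ whose preimages equidistribute towards $\mu$. The key step is then to produce a genuine orbit from a neighborhood of $p$ to $y$: because $p\in\supp\mu$, the open ball $B(p,\delta)$ has positive $\mu$-mass, so by the portmanteau theorem the limiting mass of $d^{-kn}\sum_{f^n(a)=y}\delta_a$ on $B(p,\delta)$ is at least $\mu(B(p,\delta))>0$. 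Hence for all $n$ large enough (in particular for some $N\geq2$) there is a preimage $a$ with $f^N(a)=y$ and $\dist(a,p)<\delta$.

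With such an $a$ at hand the pseudo-orbit is immediate: I set $x_0:=p$, $x_i:=f^i(a)$ for $1\leq i\leq N-1$, and $x_N:=x$. The first jump obeys $\dist(f(x_0),x_1)=\dist(f(p),f(a))<\epsilon$ by the choice of $\delta$; the intermediate jumps are exact since $x_{i+1}=f(x_i)$; and the final jump obeys $\dist(f(x_{N-1}),x_N)=\dist(f^N(a),x)=\dist(y,x)<\epsilon$. As $\epsilon>0$ was arbitrary, this yields $p\succ x$, and therefore $[\mu]\succ K$ for every class $K$.

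There is no deep obstacle here---the statement is essentially a repackaging of equidistribution, as the text already announces---but the one point that deserves care is the passage from weak convergence of the preimage measures to the concrete existence of a single preimage inside the small ball $B(p,\delta)$; this is exactly where the hypothesis $p\in\supp\mu$ enters, through the lower semicontinuity of mass on open sets. The only remaining bookkeeping is to arrange that the pseudo-orbit genuinely terminates at $x$ rather than merely near it, which is handled by placing the one admissible $\epsilon$-error on the last jump from $f^N(a)=y$ to $x$.
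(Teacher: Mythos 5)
Your proof is correct and follows essentially the same route as the paper's: use equidistribution of preimages of a point chosen near the target to produce a preimage landing near $\supp(\mu)$, then concatenate these points into an $\epsilon$-pseudo-orbit from $[\mu]$ to the given class. The only cosmetic differences are that you invoke the Forn{\ae}ss--Sibony Lebesgue-a.e.\ statement rather than the Dinh--Sibony theorem with algebraic exceptional set (the paper's Section 3 proof uses the latter, though either suffices since only density of good points is needed), and that you place the first admissible error via continuity of $f$ at $p\in\supp(\mu)$ instead of finding a preimage directly within $\epsilon$ of $f(x)$.
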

All the results above give some information on minimal or maximal classes. The remaining cases are related to the filtration of Julia sets. Recall that in \cite{fs-oka} Forn{\ae}ss-Sibony defined for each $l\in\{1,\ldots,k\}$ the {\em Julia set of order $l$}, $\Jc_l,$ of $f$ such that 
$$\Jc_k\subset\cdots\subset\Jc_1.$$
The largest one $\Jc_1$ corresponds to the standard Julia set, i.e. the complement of the {\em Fatou set} which is the largest open subset of $\Pb^k$ where the family $(f^n)_{n\geq1}$ is locally equicontinuous. On the other hand, the smallest one $\Jc_k$ is the support of the equilibrium measure $\mu.$ In general, little is known about the dynamics on $\Jc_1\setminus\Jc_k.$

\begin{theorem}\label{th-non-min-max}
Let $f\colon\Pb^k\to\Pb^k$ be an endomorphism of degree $d.$
\begin{itemize}
\item[\textbf{1)}] A chain recurrence class which is contained in the Fatou set is a sink.
\item[\textbf{2)}] If $K$ is a class which is neither maximal  nor minimal then there exist $s\in\{1,\ldots,k-1\}$ and an ergodic measure $\nu$ of entropy $s\log d$ supported in $\Jc_s\setminus\Jc_{s+1}$ such that the associated class $[\nu]$ satisfies $K\succ[\nu].$
\item[\textbf{3)}]  In particular, if the chain recurrent set of $f$ is not reduced to the union of the class $[\mu]$ and of sinks then there exists an ergodic measure with positive entropy supported in $\Jc_1\setminus\Jc_k.$
\item[\textbf{4)}] If $\Rc(f)$ is reduced to the union of $[\mu]$ with sinks then the Julia set $\Jc_1$ is contained in $\Rc(f).$
\end{itemize}
\end{theorem}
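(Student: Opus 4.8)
The four assertions will be deduced from Theorem \ref{th-rappel} and Proposition \ref{prop-max}, supplemented by two inputs I treat as known: the equicontinuity of $(f^n)_{n\ge1}$ on the Fatou set, and the correspondence between Lyapunov exponents and the filtration $\Jc_k\subset\cdots\subset\Jc_1$ (an ergodic measure with at least $s$ strictly positive Lyapunov exponents is supported in $\Jc_s$, together with de Th\'elin's inequality bounding the entropy by the sum of the positive exponents). I will treat part 1) first, then the core part 2), and finally deduce parts 3) and 4). For part 1), let $K\subset\Pb^k\setminus\Jc_1$ be a chain recurrence class contained in the Fatou set. Since $K$ is compact and $(f^n)_{n\ge1}$ is equicontinuous on a neighbourhood of $K$, the restriction $f_{|K}$ is equicontinuous, so its topological entropy vanishes. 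The plan is to show in addition that $K$ is minimal with respect to $\succ$ and then to invoke the case $s=0$ of Theorem \ref{th-rappel}. Equicontinuity yields that $K$ is Lyapunov stable: every neighbourhood of $K$ contains a forward invariant one. From this I would deduce that arbitrarily fine pseudo-orbits issued from $K$ cannot leave a small neighbourhood of $K$, so that no class lies strictly below $K$ and $K$ is minimal; Theorem \ref{th-rappel} then forces it to be a sink. The only delicate point is the passage from Lyapunov stability to this chain stability, which I would phrase as a uniform control of the jumps allowed in an $\epsilon$-pseudo-orbit issued from $K$.

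The heart of the matter is part 2). Assume $K$ is neither maximal nor minimal. By Proposition \ref{prop-max} the class $[\mu]$ is the unique maximal one, so $K\ne[\mu]$, whence $K\cap[\mu]=\varnothing$ and in particular $K\cap\Jc_k=\varnothing$ since $\Jc_k=\supp\mu\subset[\mu]$. As $K$ is not minimal it is not a sink, so part 1) gives $K\cap\Jc_1\ne\varnothing$. Let $s$ be the largest integer with $K\cap\Jc_s\ne\varnothing$; then $1\le s\le k-1$, and by maximality of $s$ the nonempty set $K\cap\Jc_s$ is disjoint from $\Jc_{s+1}$. The goal is to produce an ergodic measure $\nu$ with $\supp\nu\subset\Jc_s\setminus\Jc_{s+1}$, of entropy exactly $s\log d$, and with $K\succ[\nu]$. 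I would build $\nu$ from the dynamics carried by the Green current $T^s$ localized near $K\cap\Jc_s$: the current $T^s$ detects $\Jc_s$, and away from $\Jc_{s+1}$ the variational machinery of \cite{t-attractor} produces an invariant measure of entropy at least $s\log d$; de Th\'elin's inequality, together with the fact that such a measure has at most $s$ positive Lyapunov exponents once its support avoids $\Jc_{s+1}$, pins the entropy to exactly $s\log d$ and keeps the support in the half-open layer. The relation $K\succ[\nu]$ should follow because $\supp\nu$, being reached from $K$ by this localized construction, is chain-accessible from $K$. The main obstacle is precisely this localized construction, namely extracting a genuinely invariant ergodic measure of the sharp entropy $s\log d$ whose support avoids $\Jc_{s+1}$ while remaining below $K$; this is where the action on currents, rather than soft topological dynamics, is indispensable.

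Parts 3) and 4) are then formal consequences. For 3), if $\Rc(f)$ is not $[\mu]$ together with sinks, there is a class $K\ne[\mu]$ that is not a sink. If $K$ is not minimal it is also not maximal and part 2) applies, giving $\nu$ of positive entropy in $\Jc_s\setminus\Jc_{s+1}\subset\Jc_1\setminus\Jc_k$. If $K$ is minimal but not a sink, Theorem \ref{th-rappel} gives $s\ge1$ and an ergodic measure $\nu$ on $K$ of positive entropy with at least $s$ positive exponents, so $\supp\nu\subset\Jc_s\subset\Jc_1$; since $K\cap[\mu]=\varnothing$ and $\Jc_k\subset[\mu]$ we get $\supp\nu\subset K\subset\Pb^k\setminus\Jc_k$, hence $\supp\nu\subset\Jc_1\setminus\Jc_k$. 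For 4), suppose $\Rc(f)=[\mu]\cup(\text{sinks})$ and take $x\in\Jc_1$. As $\Jc_1$ is closed and forward invariant, $\omega(x)\subset\Jc_1\cap\Rc(f)$; since the sinks lie in the Fatou set, the only class meeting $\Jc_1$ is $[\mu]$, so $\omega(x)\subset[\mu]$. A complete Lyapunov function $g$ for $f$ (Conley \cite{conley}) is non-increasing along orbits, strictly decreasing off $\Rc(f)$, and equal to its maximal value on $[\mu]$; since $g(f^n(x))\to\max g$ while $g$ is non-increasing and bounded by $\max g$, it is constant along the orbit of $x$, which forces $x\in\Rc(f)$. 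Hence $\Jc_1\subset\Rc(f)$. I expect parts 1), 3) and 4) to be routine once part 2) is in place, the whole weight of the theorem resting on the current-theoretic construction of the intermediate measure $\nu$.
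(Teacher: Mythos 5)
Your proposal has genuine gaps at its two load-bearing points, parts \textbf{1)} and \textbf{2)}. For \textbf{1)}, the step you yourself flag---passing from Lyapunov stability of $K$ to chain stability (``arbitrarily fine pseudo-orbits issued from $K$ cannot leave a small neighbourhood of $K$'')---is not a delicate point to be smoothed over but a false implication: equicontinuity controls how a single error propagates under iteration, but the errors of an $\epsilon$-pseudo-orbit accumulate, and the total drift grows with the length of the pseudo-orbit. The identity map, or the invariant circles inside a Siegel disk, are perfectly equicontinuous and Lyapunov stable, yet pseudo-orbits starting on them reach points at definite distance; no ``uniform control of the jumps'' can repair this. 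The paper's mechanism for \textbf{1)} is entirely different: after reducing to an invariant Fatou component $\Omega$, if $K$ were not minimal one finds $y\in\Omega\setminus K$ with $x\succ y$ but $y\not\succ K$; then $y$ lies in the basin $B(A_{y,\epsilon})$ of the attracting set \eqref{eq-as} while $K$ is disjoint from that basin, so the connected set $\Omega$ must meet $\partial B(A_{y,\epsilon})$, which is impossible inside the Fatou set. It is this basin-boundary argument, not stability, that does the work.

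For \textbf{2)}, your reductions are correct ($K\cap\Jc_k=\varnothing$, $K\cap\Jc_1\neq\varnothing$), but the construction of $\nu$---which you rightly call the main obstacle---is exactly what is missing, and the missing idea is the quasi-attractor attached to pseudo-orbits: for $x\in K\cap\Jc_1$ one takes $A_x=\cap_{\epsilon>0}A_{x,\epsilon}$ as in \eqref{eq-qa}, which by construction is precisely $\{y:x\succ y\}$. Lemma \ref{le-julia} (an application of Dinh's mass estimate, Lemma \ref{le existence courant}) gives $\dim A_x\geq1$; non-maximality gives $\dim A_x\leq k-1$, since otherwise $x$ would chain-reach $\Jc_k$ and Proposition \ref{prop-max} would force $K=[\mu]$; and Corollary \ref{co-measure} applied to $A_x$ hands you the ergodic measure $\nu$ of entropy $s\log d$ supported in $A_x\cap(\Jc_s\setminus\Jc_{s+1})$. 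The relation $K\succ[\nu]$, which in your sketch ``should follow,'' is then automatic from $\supp\nu\subset A_x$. Note also that the correct $s$ is $\dim A_x$, not your largest $s$ with $K\cap\Jc_s\neq\varnothing$ (the two may differ, as $A_x$ is larger than $K$). Two smaller points: in \textbf{3)}, your minimal-non-sink case invokes the unestablished claim that $s$ positive exponents force the support into $\Jc_s$; you only need the case $s=1$, which does follow from equicontinuity and Ruelle's inequality. In \textbf{4)}, your Conley-function argument needs $g$ to attain its maximum on $[\mu]$ and to respect $\succ$ on all of $\Pb^k$, neither of which is part of the standard definition of a complete Lyapunov function (it would have to be extracted from Proposition \ref{prop-max} and an attractor-based construction of $g$); the paper instead shows, via Corollary \ref{co-measure}, that under the hypothesis every quasi-attractor has dimension $0$ or $k$, and concludes from Lemma \ref{le-julia} that $A_y=\Pb^k$, hence $y\in[\mu]$, for every $y\in\Jc_1$.
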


As the measures $\nu_i$ appearing in Theorem \ref{th-rappel}, $\nu$ in \textbf{2)}  corresponds to an {\em equilibrium measure associated to an attracting set} constructed in \cite{t-attractor}. Another way to formulate this point \textbf{2)} is that if a class is neither maximal nor minimal, it cannot admits only sinks as successors with respect to $\succ:$ one of them must come from a non-trivial ergodic measure. Observe that the self-maps of $\Pb^2$ in the {\em elementary Desboves familly} (see e.g. \cite{bdm-elliptic} or \cite{bianchi-t-desboves}) satisfy the assumption of \textbf{4)}.

As we will see in Section \ref{sec-crs}, each chain recurrent class is obtained as intersections of {\em attracting sets} and {\em repelling sets}. All the constraints above come from the ``attracting'' part, using our previous work \cite{t-attractor}. It seems that the ``repelling'' part is more flexible and it would be interesting to build examples of chain recurrent sets with unknown phenomena in $\Pb^k.$

\bigskip

\noindent\textbf{\textit{Organization of the paper.}} Section \ref{sec-crs} gives the basic definitions around attracting sets and some of their interplays with chain recurrence classes. In Section \ref{sec-current}, we recall the results from \cite{t-attractor} which are needed in the sequel. This section is technical but the idea is that the action of $f$ on a well-chosen set of currents in a neighborhood of an attracting set does not seem more complicated than the action on the set of probability measures in a neighborhood of a finite union of sinks. Finally, Section \ref{sec-proof} is devoted to the proof of Theorem \ref{th-main} and Theorem \ref{th-non-min-max}.

\bigskip

\noindent\textbf{\textit{Acknowledgements.}} This work has been supported by the ANR grant Fatou ANR-17-CE40-0002-01, the EIPHI Graduate School (contract ANR-17-EURE-0002) and Bourgogne-Franche-Comté Region.

\section{The chain recurrent set and attracting sets}\label{sec-crs}
We refer to the Introduction for the definition of the chain recurrent set. Here, we will explain its links with attracting sets.

Recall that if $f$ is an endomorphism of $\Pb^k$ then $A\subset\Pb^k$ is an {\em attracting set} if there exists a non-empty open set $U\subset\Pb^k,$ called a {\em trapping region}, such that $\overline{f(U)}\subset U$ and 
$$A=\cap_{n\geq0}f^n(U).$$
It implies that $A$ is a compact invariant set. The {\em basin of attraction} of $A$ is
$$B(A):=\cup_{n\geq0}f^{-n}(U),$$
and the {\em repelling set} associated to $A$ is
$$A^*:=\Pb^k\setminus B(A).$$
Finally, a {\em quasi-attractor} is a decreasing intersection of attracting sets.

One way to construct attracting sets and quasi-attractors is the following. For $x\in\Pb^k$ and $\epsilon>0,$ define
$$U_{x,\epsilon}:=\{y\in\Pb^k\,|\, \text{there exists an }\epsilon\text{-pseudo-orbit from }x\text{ to }y\}.$$
Such a set is open, not empty and satisfies $\overline{f(U_{x,\epsilon})}\subset U_{x,\epsilon}.$ It gives an attracting set 
\begin{equation}\label{eq-as}
A_{x,\epsilon}:=\cap_{n\geq0}f^n(U_{x,\epsilon})
\end{equation}
and a quasi attractor
\begin{equation}\label{eq-qa}
A_x:=\cap_{\epsilon>0}A_{x,\epsilon}.
\end{equation}
These notions are related to the chain recurrent set in the following way. First, it is easy to check that
$$\Rc(f)=\bigcap\left(A\cup A^*\right),$$
where the intersection is over all the attracting sets of $\Pb^k.$ Moreover, if $x,y\in\Rc(f)$ then $x\succ y$ is equivalent to $y\in A_x$ and we have the equality
$$[x]=A_x\setminus\left(\bigcup B(A)\right),$$
where the union is taken over all attracting sets $A$ such that $x\notin A.$ Hence, the class $[x]$ is minimal with respect to $\succ$ if and only if $[x]=A_x$ which is equivalent to $A_x$ minimal (with respect to inclusion) in the set of quasi-attractors.

\section{Dynamics in the space of currents}\label{sec-current}
The books \cite{de-book} and \cite{ds-lec} are good introductions to positive closed currents and the latter gives an overview of their uses in dynamics.

In order to define the mass of currents, let fix a Kähler form $\omega$ on $\Pb^k$ such that $\int_{\Pb^k}\omega^k=1.$ If $E$ is a subset of $\Pb^k$ and $p\in\{0,\dots,k\}$ then $\Cc_p(E)$ will designate the set of positive closed currents $S$ of bidegree $(p,p)$ such that $\supp(S)\subset E$ and $\|S\|:=\langle S,\omega^{k-p}\rangle=1.$

From now on, $f$ is an endomorphism of $\Pb^k$ of algebraic degree $d\geq2.$ There exists a special element in $\Cc_1(\Pb^k)$ with respect to the dynamics of $f,$ called the {\em Green current}, which can be defined by
$$T:=\lim_{n\to\infty}\frac{1}{d^n}f^{n*}\omega.$$
Forn{\ae}ss-Sibony \cite{fs-cdhd2} and Ueda \cite{ueda-fatou} proved that the support of $T$ is exactly the Julia set of $f.$ Moreover, for $l\in\{1,\dots,k\}$ the self-intersection $T^l$ is well-defined and its support is called the {\em Julia set of order $l,$} $\Jc_l:=\supp(T^l).$ This gives a filtration of totally invariant sets (i.e. invariant by $f^{-1}$)
$$\Jc_k\subset\cdots\subset\Jc_1.$$
In some sense (and still conjecturally for $l\notin\{1,k\}$), if $V$ is a generic algebraic set of pure codimension $l$ then $\Jc_l$ is the set where most of the volume of $f^{-n}(V)$ accumulates when $n$ goes to infinity. The case $l=k$ is particular. The measure $\mu:=T^k,$ called the {\em equilibrium measure} of $f,$ turns out to be the unique measure of maximal entropy for $f$ \cite{briend-duval-carac} and it has a central role in bifurcation theory \cite{bbd-bif}. The fact states above has been proven in this situation by Forn{\ae}ss-Sibony \cite{fs-cdhd2} with a pluripolar exceptional set $E,$ then improved by Briend-Duval \cite{briend-duval-carac} and finally, Dinh-Sibony obtained the following sharp result.
\begin{theorem}[\cite{ds-allupoly}]\label{th-equi}
There exists a proper algebraic subset $E\subset\Pb^k$ such that if $a\in\Pb^{k}\setminus E$ then
$$\lim_{n\to\infty}\frac{1}{d^{kn}}\sum_{y\in f^{-n}(a)}\delta_y=\mu.$$
\end{theorem}
As we have already said, this implies directly Proposition \ref{prop-max}.
\begin{proof}[Proof of Proposition \ref{prop-max}]
Let $E\subset\Pb^k$ be the proper algebraic subset given by Theorem \ref{th-equi} and let $x\in\Jc_k:=\supp(\mu).$ Let $K$ be a chain recurrence class, $z$ be a point of $K$ and $\epsilon>0.$ Since $\Pb^k\setminus E$ is dense, there is $a\in\Pb^k\setminus E$ such that $\dist(a,z)<\epsilon$ and by Theorem \ref{th-equi}, for $n\geq1$ large enough there is $y\in f^{-n}(a)$ with $\dist(f(x),y)<\epsilon.$ This gives an $\epsilon$-pseudo-orbit between $x$ and $z$ and thus, as $\epsilon>0$ was arbitrary, $[\mu]\succ K.$
\end{proof}

Relying on currents, Daurat \cite{daurat} defined the {\em dimension} of trapping regions, attracting sets and quasi-attractors.
\begin{definition}\label{def-daurat}
If $E$ is a trapping region, an attracting set or a quasi-attractor then $E$ is said to have dimension $s\in\{0,\dots,k\}$ and codimension $p:=k-s$ if
$$\Cc_{p}(E)\neq\varnothing\ \text{ and }\ \Cc_{p-1}(E)=\varnothing.$$
\end{definition}
Let $U\subset\Pb^k$ be a trapping region for $f$ and $A=\cap_{n\geq0}f^n(U)$ be the associated attracting set.
By the above definition, if $U$ has codimension $p$ then $f$ acts on the non-empty set $\Cc_p(U)$ by {\em normalized push-forward}
$$\Lambda:=\frac{1}{d^{(k-p)}}f_*.$$
This notation has an ambiguity as it depends on $p$ but we will only use it when the value of this codimension is clear. In this situation, it is natural to introduce the space of invariant current
$$\Ic_p(U):=\{S\in\Cc_p(U)\,|\,\Lambda S=S\}.$$
Another dynamically defined subset of $\Cc_p(U)$ will play an important role in Section \ref{sec-proof}. It has been introduced by Dinh in \cite{d-attractor} and it is given by
$$\Dc_p(U)=\left\{S\in\Cc_p(U)\,\hfill\vline\hfill\begin{minipage}{0.60\textwidth}\begin{center}there exists a sequence $(S_n)_{n\geq1}$ in $\Cc_p(U)$ such that\\$S=\Lambda^nS_n$\end{center}\end{minipage}\right\}.$$
It also corresponds to the set of all possible limit values of sequences of the form $(\Lambda^nS_n)_{n\geq1}$ where $(S_n)_{n\geq1}$ is a sequence in $\Cc_p(U).$
\begin{remark}\label{rk-ext}
In general, $\Ic_p(U)$ and $\Dc_p(U)$ do not coincide. For example, if $U$ is a small neighborhood of an attracting cycle $\{a,b\}$ of period $2$ then $\Ic_k(U)=\{(\delta_a+\delta_b)/2\}$ while $\Dc_k(U)$ is the set of convex combinations of $\delta_a$ and $\delta_b.$ In particular, here $(\delta_a+\delta_b)/2$ is not extremal in $\Dc_k(U).$
\end{remark}

The following lemma, obtained by Dinh \cite[Proposition 4.7]{d-attractor}, can be used to construct elements in $\Cc_p(U).$

\begin{lemma}\label{le existence courant}
Let $\chi$ be a positive smooth function in $\Pb^k.$ If $S$ is a current in $\Cc_{k-l}(\Pb^k)$ then the sequence $d^{-ln}(f^n)_*(\chi S)$ has bounded mass and each of its limit values is a positive closed $(k-l,k-l)$-current of $\Pb^k$ of mass $c:=\langle S\wedge T^{l},\chi\rangle.$
\end{lemma}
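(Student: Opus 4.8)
The plan is to push everything onto the fixed current $S$ through the duality $\langle(f^n)_*P,\alpha\rangle=\langle P,(f^n)^*\alpha\rangle$ and to use that the normalized pullbacks $\omega_n:=d^{-n}(f^n)^*\omega$ converge to the Green current $T.$ Write $R_n:=d^{-ln}(f^n)_*(\chi S).$ Positivity is immediate: $\chi S\ge0$ and $f^n$ is holomorphic, so $R_n\ge0,$ and positivity passes to weak limits. For the mass I would compute
$$\|R_n\|=\langle R_n,\omega^l\rangle=\langle\chi S,\omega_n^l\rangle\le\|\chi\|_\infty\,\langle S,\omega_n^l\rangle,$$
using $0\le\chi\le\|\chi\|_\infty$ and $\omega_n^l\ge0.$ Since $S$ and $(f^n)^*\omega^l$ are closed, $\langle S,\omega_n^l\rangle=d^{-ln}\langle S,(f^n)^*\omega^l\rangle$ depends only on cohomology classes; as $\{(f^n)^*\omega^l\}=d^{ln}\{\omega^l\}$ and $\{S\}\cdot\{\omega^l\}=\langle S,\omega^l\rangle=\|S\|=1,$ this equals $1.$ Hence $\|R_n\|\le\|\chi\|_\infty$ for every $n,$ which gives bounded mass and, by compactness, the existence of limit values.

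To identify the mass of a limit value, I would let $n\to\infty$ in $\|R_n\|=\langle\chi S,\omega_n^l\rangle.$ Because $\omega_n\to T$ with uniformly convergent potentials and $T$ has continuous local potentials, the intersection products $\omega_n^l\wedge S$ converge to $T^l\wedge S$ as measures (continuity of the Bedford--Taylor product against continuous test functions). Pairing with the continuous function $\chi$ yields $\|R_n\|\to\langle\chi S,T^l\rangle=\langle S\wedge T^l,\chi\rangle=c.$ Since pairing against the \emph{smooth} form $\omega^l$ is continuous for the weak topology, any limit value $R_\infty$ of $(R_n)$ satisfies $\|R_\infty\|=\langle R_\infty,\omega^l\rangle=\lim\langle R_{n_j},\omega^l\rangle=c;$ in particular the mass does not drop and is exactly $c.$

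The main obstacle is closedness, because $\chi S$ is not closed. As $d$ commutes with pushforward and $dS=0,$ one has $dR_n=d^{-ln}(f^n)_*(d\chi\wedge S),$ and it is enough to prove $\partial R_n\to0$ (the $\dbar$ part then follows by conjugation, since $R_n$ is real). Testing $\partial R_n=d^{-ln}(f^n)_*(\partial\chi\wedge S)$ against a smooth $(l-1,l)$-form $\Psi$ and transferring $(f^n)^*$ to the test form, the task becomes to show
$$a_n:=d^{-ln}\langle S,\partial\chi\wedge(f^n)^*\Psi\rangle\longrightarrow0.$$
Here I would invoke the Cauchy--Schwarz inequality for the positive closed current $S,$ which bounds $|\langle S,\partial\chi\wedge(f^n)^*\Psi\rangle|^2$ by the product of
$$X_n:=\langle S,\,i\,\partial\chi\wedge\dbar\bar\chi\wedge(f^n)^*\omega^{l-1}\rangle\quad\text{and}\quad Y_n:=\langle S,\,(f^n)^*\Theta_\Psi\wedge(f^n)^*\omega^{l-1}\rangle,$$
where $\Theta_\Psi$ is a fixed positive $(1,1)$-form dominating the coefficients of $\Psi.$ Since $(f^n)^*\Theta_\Psi\le C_\Psi\,(f^n)^*\omega,$ the same cohomological computation as above gives $Y_n\le C_\Psi\langle S,(f^n)^*\omega^l\rangle=C_\Psi\,d^{ln}.$ The decisive point is $X_n:$ as $i\,\partial\chi\wedge\dbar\bar\chi$ is a \emph{fixed} smooth positive $(1,1)$-form it is dominated by $C\omega,$ so $X_n\le C\langle S,\omega\wedge(f^n)^*\omega^{l-1}\rangle=C\,d^{(l-1)n},$ which is one power of $d^n$ short of $Y_n.$ Combining, $|\langle S,\partial\chi\wedge(f^n)^*\Psi\rangle|\le(X_nY_n)^{1/2}\le C'\,d^{(2l-1)n/2},$ and after the normalization $d^{-ln}$ this gives $|a_n|\le C'\,d^{-n/2}\to0.$ Thus $\partial R_n\to0,$ hence $dR_\infty=0$ and $R_\infty$ is closed. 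The heart of the argument is precisely this gain of a factor $d^{-n/2}:$ replacing one pullback $(f^n)^*\omega,$ which scales like $d^{n},$ by the fixed form coming from $\partial\chi,$ which scales like $d^{0},$ costs the missing power of $d^n,$ and it is this mismatch---made rigorous through Cauchy--Schwarz---that forces all limit currents to be closed.
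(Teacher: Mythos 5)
Your proof is correct, but note that the paper itself contains no proof of this lemma: it is quoted verbatim from Dinh [Din07, Proposition~4.7], and your argument essentially reconstructs that standard proof. The three ingredients you use are exactly the expected ones: cohomological invariance ($\langle S,(f^n)^*\omega^l\rangle=d^{ln}$) for the uniform mass bound, uniform convergence of the potentials of $d^{-n}(f^n)^*\omega$ to the continuous Green potential to identify the limit mass as $\langle S\wedge T^l,\chi\rangle$ via continuity of Bedford--Taylor products, and the Cauchy--Schwarz estimate in which the fixed form $i\,\partial\chi\wedge\dbar\chi$ (of size $O(1)$) replaces one pullback factor (of size $O(d^n)$), producing the decisive $O(d^{-n/2})$ decay of $\partial R_n$ that forces every limit current to be closed; the only points left implicit --- the decomposition of the test form $\Psi$ into wedges of $(0,1)$-forms with strongly positive $(l-1,l-1)$-forms so that Cauchy--Schwarz applies termwise --- are routine positivity bookkeeping.
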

In particular, we used this result to obtain the following equivalent form of Definition \ref{def-daurat}.
\begin{proposition}[\cite{t-attractor}]
Let $E$ be a trapping region, an attracting set or a quasi-attractor. The dimension of $E$ is $s$ if and only if
$$E\cap\Jc_s\neq\varnothing\ \text{ and }\ E\cap\Jc_{s+1}=\varnothing.$$
\end{proposition}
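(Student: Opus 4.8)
The plan is to reduce the equivalence to two implications relating the non-emptiness of $\Cc_q(E)$ to the condition $E\cap\Jc_{k-q}\neq\varnothing$, for the two relevant bidegrees $q=k-s$ and $q=k-s-1$. Indeed, by Definition \ref{def-daurat}, $E$ has dimension $s$ exactly when $\Cc_{k-s}(E)\neq\varnothing$ and $\Cc_{k-s-1}(E)=\varnothing$, so it suffices to prove that for every $q$,
$$\Cc_q(E)\neq\varnothing\iff E\cap\Jc_{k-q}\neq\varnothing.$$
Applying this with $q=k-s$ gives $\Cc_{k-s}(E)\neq\varnothing\iff E\cap\Jc_s\neq\varnothing$, and with $q=k-s-1$ (so $k-q=s+1$) gives, after taking contrapositives, $\Cc_{k-s-1}(E)=\varnothing\iff E\cap\Jc_{s+1}=\varnothing$; together these yield the proposition. (The boundary cases $s\in\{0,k\}$ use the conventions $\Jc_0=\Pb^k$, $\Jc_{k+1}=\varnothing$ and $\Cc_{-1}(E)=\varnothing$.)

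For the implication $\Cc_q(E)\neq\varnothing\Rightarrow E\cap\Jc_{k-q}\neq\varnothing$, I would take $S\in\Cc_q(E)$ and form the wedge product $S\wedge T^{k-q}$, which is well defined since $T$ has continuous (indeed H\"older) local potentials. This is a positive measure whose support is contained in $\supp(S)\cap\supp(T^{k-q})\subset E\cap\Jc_{k-q}$. Its total mass is the cohomological intersection number: since $\langle S,\omega^{k-q}\rangle=1$ forces $\{S\}=\{\omega^q\}$ in $H^{q,q}(\Pb^k)\cong\Rb$, and likewise $\{T\}=\{\omega\}$, we get $\langle S\wedge T^{k-q},1\rangle=\int_{\Pb^k}\omega^q\wedge\omega^{k-q}=1\neq0$. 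Hence $E\cap\Jc_{k-q}\neq\varnothing$. This direction only uses that $\supp(S)$ is a closed subset of $E$.

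The converse is the substantial direction, and it is where Lemma \ref{le existence courant} and the trapping region enter. Suppose first that $E$ is a trapping region $U$ or the attracting set $A=\cap_n f^n(U)$, and pick $x\in E\cap\Jc_{k-q}$. With $l:=k-q$, I would choose a positive smooth cut-off $\chi$ supported in $U$ with $\chi(x)>0$ and apply the lemma to $S=\omega^{k-l}\in\Cc_{k-l}(\Pb^k)$: the sequence $\Lambda^n(\chi\,\omega^{k-l})=d^{-ln}(f^n)_*(\chi\,\omega^{k-l})$ then admits limit values that are positive closed $(k-l,k-l)$-currents of mass $c=\langle\omega^{k-l}\wedge T^l,\chi\rangle$. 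Because $x\in\supp(T^l)=\Jc_l$ and $\omega^{k-l}$ is strictly positive, the measure $\omega^{k-l}\wedge T^l$ has $x$ in its support, so $c>0$; in particular any limit value $R$ is nonzero. The key point is to locate $\supp(R)$: since $\supp(\chi)\subset U$ and $\overline{f(U)}\subset U$, the compact sets $\overline{f^n(U)}$ decrease to $A$, so $\supp\big((f^n)_*(\chi\,\omega^{k-l})\big)\subset\overline{f^n(U)}$ eventually lies in any neighborhood of $A$, forcing $\supp(R)\subset A\subset E$. Then $R/c\in\Cc_{k-l}(E)$, giving $\Cc_q(E)\neq\varnothing$. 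For a quasi-attractor $E=\cap_j A_j$, any $x\in E\cap\Jc_{k-q}$ lies in each $A_j\cap\Jc_{k-q}$, so the previous step produces $R_j\in\Cc_q(A_j)$; a weak-$*$ limit of the $R_j$ is a mass-$1$ positive closed current supported in $\cap_j A_j=E$, again yielding $\Cc_q(E)\neq\varnothing$.

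The main obstacle is controlling the support of the limit current $R$ in the converse direction: one must simultaneously guarantee that $R$ is nonzero (handled by the mass formula of Lemma \ref{le existence courant} together with the choice of $\chi$ meeting $\Jc_l$) and that it is supported in $E$ (handled by the contraction $\overline{f^n(U)}\to A$ of the trapping region). This is precisely why the statement is restricted to trapping regions, attracting sets and quasi-attractors rather than arbitrary closed sets. A secondary, standard technical point is the justification that $S\wedge T^{k-q}$ is well defined with the expected support and mass; using the smooth representative $S=\omega^{k-l}$ in the construction sidesteps any regularity issue there.
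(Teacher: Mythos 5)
Your proposal is correct and takes essentially the approach the paper points to: the paper states this proposition without proof (deferring to \cite{t-attractor}) but explicitly says it is obtained from Lemma \ref{le existence courant}, which is exactly the tool you use for the substantial direction, and your cut-off/push-forward/support-control argument is the same one the paper itself carries out in the proof of Lemma \ref{le-julia}. The forward direction via the wedge product $S\wedge T^{k-q}$, with mass computed cohomologically and support contained in $\supp(S)\cap\Jc_{k-q}$, likewise matches the original source's argument.
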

The fact that $\Jc_{s+1}$ is disjoint from a dimension $s$ trapping region $U$ is crucial in the proof of the following theorem. In some sense, it says that there exists at least one element in $\Cc_p(U)$ with an attracting behavior.
\begin{theorem}[\cite{t-attractor}]
Let $U\subset\Pb^k$ be a trapping region of codimension $p.$ There exist a trapping region $D_\tau\subset U$ and a current $\tau$ in $\Cc_{p}(D_\tau)$ such that
$$\lim_{N\to\infty}\frac{1}{N}\sum_{n=0}^{N-1}\Lambda^nR=\tau$$
for all continuous currents $R$ in $\Cc_{p}(D_\tau).$
\end{theorem}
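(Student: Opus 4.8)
The goal is to produce, inside the trapping region $U$ of codimension $p$, a continuous attracting current $\tau$ together with a smaller trapping region $D_\tau$ on which the Cesàro averages of $\Lambda^n$ converge to $\tau$ for every continuous input current. My plan is to build $\tau$ by a fixed-point/averaging procedure, and then to extract the smaller trapping region from the convergence itself.

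\medskip

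\textbf{Step 1: produce a candidate invariant current.} First I would apply Lemma \ref{le existence courant} to manufacture an element of $\Cc_p(U)$ with a genuine attracting behavior. Since $U$ has codimension $p$, i.e. dimension $s=k-p$, the proposition relating dimension to the Julia filtration tells us $U\cap\Jc_s\neq\varnothing$ while $U\cap\Jc_{s+1}=\varnothing$. The disjointness $U\cap\Jc_{s+1}=\varnothing$ is exactly what lets Lemma \ref{le existence courant} be run with $l=s$ on a current supported near $U$: the mass $c=\langle S\wedge T^{s},\chi\rangle$ can be arranged to be positive because $\supp(T^s)=\Jc_s$ meets $U$, so a suitable choice of $S$ and of the cutoff $\chi$ gives a nonzero limit. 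Normalizing, this yields a current $\tau_0\in\Cc_p(\overline U)$ obtained as a limit of $\Lambda^{n}$ applied to smooth data; by construction $\tau_0$ lies in $\Dc_p(U)$ and is $\Lambda$-invariant up to the averaging I perform next.

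\medskip

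\textbf{Step 2: average to get invariance and convergence.} To upgrade to the Cesàro statement I would study the operator $\Lambda=d^{-(k-p)}f_*$ acting on the convex compact set $\Cc_p(U)$. The Cesàro means $\frac1N\sum_{n=0}^{N-1}\Lambda^n R$ live in this compact set (mass is preserved and positivity and closedness pass to limits), so any weak limit $\tau$ is a fixed point of $\Lambda$, hence lies in $\Ic_p(U)\subset\Dc_p(U)$. The real content is that the limit does not depend on the continuous starting current $R$. For this I would use the attracting feature coming from Lemma \ref{le existence courant}: applied to a continuous $R$, the potentials of $\Lambda^n R$ are controlled, and the Green current $T$ plays the role of a contracting direction, so that the difference $\Lambda^n R-\Lambda^n R'$ of two continuous inputs has mass-normalized potentials tending to $0$. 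This forces all Cesàro limits to coincide with the single current $\tau$ built in Step 1. Continuity of $R$ is essential here so that $\langle R\wedge T^s,\cdot\rangle$ is well-defined and the quantitative estimates of Lemma \ref{le existence courant} apply uniformly.

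\medskip

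\textbf{Step 3: locate the smaller trapping region $D_\tau$.} Once $\tau$ is known to be the universal Cesàro limit, I would produce $D_\tau$ as a sublevel neighborhood adapted to $\tau$. Concretely, because $\overline{f(U)}\subset U$ there is room to choose an open set with $\supp(\tau)\subset D_\tau$ and $\overline{f(D_\tau)}\subset D_\tau\subset U$; the convergence $\frac1N\sum\Lambda^n R\to\tau$ for $R\in\Cc_p(D_\tau)$ then follows from the already-established convergence on $\Cc_p(U)$ together with the fact that mass is not lost on passing to the smaller region, since $\supp(\tau)$ sits well inside $D_\tau$. The subtlety is to guarantee that restricting to $D_\tau$ keeps $\Cc_p(D_\tau)$ nonempty and that the limit current still has full mass $1$ there; this is where the strict inclusion $\overline{f(U)}\subset U$ and the support information $\supp(\tau)\subset A$ are used.

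\medskip

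\textbf{Main obstacle.} The hard part will be Step 2: proving that the Cesàro averages converge to a current \emph{independent of the continuous starting point} $R$, rather than merely having convergent subsequences to some fixed point of $\Lambda$. This uniqueness/mixing-type statement is what genuinely requires the pluripotential estimates behind Lemma \ref{le existence courant} and the interplay with the Green current $T^s$; controlling the $dd^c$-potentials of $\Lambda^n R$ uniformly in $R$, and showing the transverse contraction toward $\tau$, is the technical heart and the step most likely to need the full strength of the machinery from \cite{t-attractor} and \cite{d-attractor}.
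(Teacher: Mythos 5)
This statement is not proved in the paper at all: it is quoted verbatim from \cite{t-attractor} (where it is the main technical result, occupying most of that paper), so there is no in-paper proof to compare against. Judged on its own terms, your proposal has two genuine gaps. First, Step 2 defers precisely the content of the theorem. Compactness of $\Cc_p(U)$ and a Ces\`aro argument do give limit points that are $\Lambda$-invariant, but nothing in Lemma \ref{le existence courant} provides any contraction: that lemma only says that $d^{-ln}(f^n)_*(\chi S)$ has bounded mass and that its limit values are closed of mass $\langle S\wedge T^l,\chi\rangle$. The claim that ``the Green current plays the role of a contracting direction, so the difference of two continuous inputs tends to $0$'' is exactly the uniqueness/equidistribution statement to be proved, and it requires the full pluripotential machinery (uniform control of $dd^c$-potentials of $\Lambda^n R$ up to the boundary of a well-chosen region) developed in \cite{t-attractor}; acknowledging it as ``the main obstacle'' does not discharge it.

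Second, and more structurally, Step 3 runs the logic backwards. You propose to choose $D_\tau$ \emph{after} the convergence is established and to deduce convergence on $\Cc_p(D_\tau)$ ``from the already-established convergence on $\Cc_p(U)$.'' But no convergence on $\Cc_p(U)$ was established in Step 2, and such a universal statement on all of $\Cc_p(U)$ is false in general: take $U$ to be a trapping region of dimension $0$ containing two distinct sinks $a$ and $b$ (or, in higher dimension, two disjoint attracting sets of the same dimension $s$); then $\Lambda^n$ of a continuous current supported near $a$ converges to a current on the attractor through $a$, and similarly for $b$, so there is no single limit $\tau$ valid for all of $\Cc_p(U)$. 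This is the very reason the theorem introduces a \emph{smaller} trapping region $D_\tau$: the region must be constructed, simultaneously with $\tau$, so that it captures the basin of a single attracting current. In other words, the construction of $D_\tau$ is an input to the convergence proof, not a corollary of it, and your plan as written cannot be repaired by reordering the steps without supplying that construction.
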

As this type of currents will play a central role in what follows, we coin the following definition.
\begin{definition}\label{def-attracting-current}
Let $V$ be a trapping region of codimension $p.$ A current $S\in\Cc_{p}(V)$ is \emph{attracting} on $V$ for $f$ if
$$\lim_{n\to\infty}\frac{1}{N}\sum_{n=0}^{N-1}\Lambda^nR=S,$$
for all continuous form $R$ in $\Cc_{p}(V).$ A current in $\Cc_{p}(\Pb^k)$ is an \emph{attracting current} if it is attracting on some trapping region of codimension $p.$
\end{definition}
As the simple example of an attracting cycle of period $2$ shows, it is not possible to remove the Cesàro mean in general. However, it turns out that the Cesàro mean is unnecessary if $f$ is replaced by an iterate.
\begin{theorem}\label{th-finite}
Let $U$ be a trapping region of codimension $p.$ There exists $n_0\geq1$ such that if we replace $f$ by $f^{n_0}$ the three following equivalent conditions hold.
\begin{itemize}
\item[\textbf{1)}] A current in $\Cc_p(U)$ is attracting for $f$ if and only if it is attracting for all iterates $f^n,$ $n\geq1.$
\item[\textbf{2)}] If $\tau\in\Cc_p(U)$ is attracting for $f$ on $V$ then
$$\lim_{n\to\infty}\Lambda^nR=\tau,$$
for all continuous form $R$ in $\Cc_{p}(V).$
\item[\textbf{3)}] The attracting currents in $\Cc_p(U)$ are extremal points in $\Dc_p(U).$
\end{itemize}
Moreover, the set of attracting currents in $\Cc_p(U)$ is finite.
\end{theorem}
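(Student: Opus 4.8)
The plan is to reorganise everything around the affine dynamical system obtained by letting $\Lambda$ act on the compact convex set $\Dc_p(U)$. First I would record the structural facts: $\Lambda$ preserves the mass $\|\cdot\|$ and sends $\Cc_p(U)$ into itself (since $\overline{f(U)}\subset U$ and $\langle S,(f^*\omega)^{k-p}\rangle=d^{k-p}$ for closed $S$), so it restricts to a continuous affine surjection of $\Dc_p(U)$ onto itself. Every attracting current of Definition \ref{def-attracting-current} satisfies $\Lambda\tau=\tau$ (average the Ces\`aro sums), hence lies in $\Ic_p(U)\subset\Dc_p(U)$. The organising remark is that $\Lambda$ maps the set $\Ac$ of attracting currents into itself: applying $\Lambda$ to a Ces\`aro limit and using the telescoping shows $\Lambda\tau$ is again attracting on a trapping region associated with $f(V)$. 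Thus, once one knows that $\Ac$ is finite, it splits into finitely many $\Lambda$-cycles, and one takes $n_0$ to be the least common multiple of their lengths; for $f^{n_0}$ each attracting current becomes a genuine fixed point. I would therefore separate the formal part (the conditions for such an $n_0$) from the substantial part (finiteness of $\Ac$).

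For the equivalences, established for the map $f^{n_0}$, I would argue as follows. By density of continuous forms among currents of fixed mass together with equicontinuity of the Ces\`aro averages, ``attracting on $V$'' is exactly the assertion that $\Ic_p(V)$ is the singleton $\{\tau\}$ and that $\Lambda$ is uniquely ergodic on the cone over $V$; by Choquet theory on $\Dc_p(U)$ such a $\tau$ is an extremal point, which is condition \textbf{3)}. The passage from the Ces\`aro mean to an honest limit in \textbf{2)} is precisely the vanishing of peripheral eigenvalues other than $1$: the only obstruction is a nontrivial rotation, i.e. a root of unity, and the finite $\Lambda$-cycle structure bounds the order of all of them, so replacing $f$ by $f^{n_0}$ turns $\frac1N\sum_{n<N}\Lambda^nR$ into $\lim_n\Lambda^nR=\tau$. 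Condition \textbf{1)} then follows from \textbf{2)}, since genuine convergence for $f^{n_0}$ forces the same limit along every $f^{n_0 m}$, and conversely. The attracting $2$-cycle of Remark \ref{rk-ext} is the guiding picture: for $f$ the unique attracting current fails to be extremal, while for $f^2$ the two extremal pieces separate and the mean becomes superfluous.

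The core difficulty is the finiteness of $\Ac$, and I would try to deduce it from compactness of $\Dc_p(U)$ by showing that $\Ac$ is discrete and closed in the weak topology, a closed discrete subset of a compact metric space being finite. Two preliminary facts organise this. By Lemma \ref{le existence courant}, for a fixed smooth $\chi\geq0$ and $S\in\Cc_p(\Pb^k)$ the iterates $\Lambda^n(\chi S)$ have limit currents of total mass $\langle S\wedge T^{k-p},\chi\rangle$, which gets distributed among the attracting currents; and there is a separation property, namely that distinct attracting currents are attracting over trapping regions with disjoint attracting sets, since a common trapping subregion would force a test current to have two distinct Ces\`aro limits. The crux is then a uniform lower bound: each attracting current must capture a definite, uniformly positive share of this mass. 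Such a bound simultaneously isolates the points of $\Ac$ and rules out an accumulating sequence $\tau_i\to\tau_\infty$, giving closedness because $\tau_\infty$ would itself be attracting. Proving this uniform bound — equivalently, that a weak limit of attracting currents is again attracting — is where I expect essentially all the work to concentrate, and where the holomorphic and current-theoretic rigidity behind \cite{t-attractor} is indispensable; it is exactly the rigidity that smooth dynamics lacks, as the adding-machine minimal classes recalled in the introduction demonstrate. Granting finiteness, the $\Lambda$-cycle decomposition of the first paragraph fixes $n_0$ and yields \textbf{1)}--\textbf{3)}.
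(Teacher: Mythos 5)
Your proposal has genuine gaps, and the most serious ones are visible from inside the paper itself. First, your mechanism for producing $n_0$ is internally inconsistent: you correctly observe (by telescoping the Ces\`aro sums) that every attracting current for $f$ satisfies $\Lambda\tau=\tau$, but you then propose to split $\Ac$ into $\Lambda$-cycles and take $n_0$ to be the least common multiple of their lengths. Since all elements of $\Ac$ are $\Lambda$-fixed, every cycle has length one and your recipe always returns $n_0=1$, which Remark \ref{rk-ext} shows is false. The permutation that actually matters is not $\Lambda$ acting on the attracting currents of $f$ but $f_*$ acting on the attracting currents of an \emph{iterate}: passing from $f$ to $f^m$ creates new attracting currents (in the $2$-cycle example, $\delta_a$ and $\delta_b$ are attracting for $f^2$ but not for $f$, and their average is the unique attracting current of $f$), and the real content of \textbf{1)} is that this proliferation stabilizes at some $n_0$, i.e. that the attracting currents of $f^{n_0m}$ coincide with those of $f^{n_0}$ for every $m\geq1$. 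Your framework never formulates, let alone proves, this stabilization. Second, your route to \textbf{3)} --- unique ergodicity plus ``Choquet theory'' --- is refuted by the same example: for $f$ itself, $(\delta_a+\delta_b)/2$ is the unique invariant probability measure near the cycle and is attracting, yet it is \emph{not} extremal in $\Dc_k(U)$. Extremality is not a formal consequence of unique ergodicity; it genuinely uses that $\tau$ is attracting for all iterates, and the paper's argument goes through the variational identity $\langle\tau,\phi\rangle=\max_{R\in\Dc_p(V)}\langle R,\phi\rangle$ for smooth $\phi$ with $\ddc\phi\geq0$ (taken from the proof of Theorem 3.35 of \cite{t-attractor}), extremality then following because a proper convex combination realizing a maximum forces both pieces to realize it. Nothing in your proposal plays the role of this identity.

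Third, the finiteness of the set of attracting currents --- which the paper explicitly calls the key point from which \textbf{1)}, \textbf{2)}, \textbf{3)} are derived --- is exactly the step you defer: your discreteness-plus-closedness strategy reduces everything to a uniform positive lower bound on the mass each attracting current captures (equivalently, that weak limits of attracting currents are attracting), and you concede that this is where ``essentially all the work'' lies and appeal to the rigidity behind \cite{t-attractor}. Likewise, your justification of \textbf{2)} via ``vanishing of peripheral eigenvalues'' invokes a spectral theory for $\Lambda$ on a space of currents that is never set up. To be fair in the comparison: the paper does not reprove these facts either; Theorem \ref{th-finite} is imported from \cite{t-attractor}, and the only new content in this paper is the observation that \textbf{3)} follows from the arguments there via the variational identity above. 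But a blind proof must supply the imported content, and your proposal replaces it with a plan whose two concrete, checkable steps (the cycle decomposition defining $n_0$ and the Choquet argument for extremality) are incorrect as stated, while the genuinely hard steps are left to the very reference the theorem comes from.
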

The point \textbf{3)} was not explicitly states in \cite{t-attractor} but the proof is exactly the same than the extremality part of Theorem 3.12 and Corollary 3.21 there, using the first part of the proof of Theorem 3.35 i.e. if $\tau\in\Cc_p(U)$ is attracting on $V\subset U$ for all iterates of $f$ and if $\phi$ is a smooth $(s,s)$-form on $U$ such that $\ddc\phi\geq0$ then
$$\langle\tau,\phi\rangle=\max_{R\in \Dc_p(V)}\langle R,\phi\rangle.$$
Observe that, as it was point out in Remark \ref{rk-ext}, it is necessary to consider an iterate $f^{n_0}$ in order to obtain \textbf{3)}.

The finiteness part in Theorem \ref{th-finite} is indeed the key point of this result which gives \textbf{1)}, \textbf{2)} or \textbf{3)}. It is also this point which allows to obtain results about quasi-attractors.

\begin{corollary}\label{co-qa}
Let $K$ be a minimal quasi-attractor of dimension $s.$ There exists an integer $n_0\geq1$ such that if we replace $f$ by $f^{n_0}$ then $K$ splits into $n_0$ quasi-attractors $K=K_1\cup\cdots\cup K_{n_0}$ such that each $K_i$ is contained in a trapping region $U_{K_i}$ which supports a unique attracting current $\tau_i\in\Cc_{p}(K_i)$ with
$$\lim_{n\to\infty}\frac{1}{d^{ns}}(f^n)_*R=\tau_i$$
for all continuous currents $R$ in $\Cc_{p}(U_{K_i}).$
\end{corollary}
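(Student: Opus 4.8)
The plan is to extract the $n_0$-periodic structure directly from Theorem \ref{th-finite} and the characterization of minimality given at the end of Section \ref{sec-crs}. Since $K$ is a minimal quasi-attractor of dimension $s$, by the discussion in Section \ref{sec-crs} we have $K = A_x$ for some $x \in K$, and $A_x$ is minimal with respect to inclusion among quasi-attractors. Fix a trapping region $U$ of codimension $p = k-s$ with $A = \cap_{n\geq 0} f^n(U)$ an attracting set containing $K$ and contained in a small neighborhood of $K$; such a $U$ exists because $K$ is an intersection of attracting sets and $\Cc_p(U) \neq \varnothing$ while $\Cc_{p-1}(U) = \varnothing$ by the dimension hypothesis. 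Applying Theorem \ref{th-finite} to this $U$ gives an integer $n_0 \geq 1$ such that, after replacing $f$ by $f^{n_0}$, the set of attracting currents in $\Cc_p(U)$ is finite, each is attracting for all iterates, and each equals the honest (non-Cesàro) limit of $\Lambda^n R$ on its trapping region.

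The heart of the argument is to show that this finite set of attracting currents is exactly indexed by the connected ``pieces'' of $K$ under $f^{n_0}$, and that these pieces are the desired $K_1, \ldots, K_{n_0}$. First I would observe that $f$ permutes the attracting currents in $\Cc_p(U)$: if $\tau$ is attracting on $V$ then $\Lambda \tau$ is attracting on a trapping region contained in $f(V)$, and pushing forward the defining limit $\lim_n \Lambda^n R = \tau$ shows $\Lambda\tau$ inherits the attracting property. Minimality of $K$ forces this permutation to act transitively on the (finite) set of attracting currents whose support meets $K$, since a non-trivial invariant proper subset would produce a proper sub-quasi-attractor, contradicting minimality. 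Counting the length of this cycle against the passage from $f$ to $f^{n_0}$ identifies the number of $f^{n_0}$-fixed attracting currents as exactly $n_0$ (this is where the choice of $n_0$ in Theorem \ref{th-finite} must be reconciled with the period of the cycle; one takes $n_0$ to be a common multiple so that both the extremality statement and the cycle decomposition hold simultaneously). Labeling the resulting $f^{n_0}$-attracting currents $\tau_1, \ldots, \tau_{n_0}$, I would then set $U_{K_i}$ to be the trapping region on which $\tau_i$ is attracting and define $K_i := K \cap \supp(\tau_i)$, equivalently $K_i = \cap_{n\geq 0} (f^{n_0})^n(U_{K_i})$.

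It then remains to verify the three asserted conclusions. That each $K_i$ is a quasi-attractor for $f^{n_0}$ follows from writing it as a decreasing intersection of attracting sets inside $U_{K_i}$, exactly as in \eqref{eq-as}--\eqref{eq-qa}; that $K = K_1 \cup \cdots \cup K_{n_0}$ and that the union is disjoint uses transitivity of the cycle together with the fact that distinct extremal attracting currents in $\Dc_p(U)$ have disjoint supports inside $K$ (otherwise a convex combination argument as in Remark \ref{rk-ext} would contradict extremality, i.e. point \textbf{3)} of Theorem \ref{th-finite}). Finally, $\tau_i \in \Cc_p(K_i)$ and the convergence $\lim_n d^{-ns}(f^n)_* R = \tau_i$ for continuous $R \in \Cc_p(U_{K_i})$ is precisely point \textbf{2)} of Theorem \ref{th-finite} applied to $f^{n_0}$, after noting $\Lambda^n = d^{-ns}(f^n)_*$ in codimension $p$ since $k - p = s$.

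The main obstacle I anticipate is reconciling the integer $n_0$ coming from Theorem \ref{th-finite} (chosen so that extremality and the iterate-independence hold) with the combinatorial period of the cyclic permutation of attracting currents, and showing these can be made to coincide rather than merely be compatible; the clean statement ``$K$ splits into exactly $n_0$ pieces'' requires that the number of $f^{n_0}$-fixed attracting currents supported in $K$ equals $n_0$, which hinges on the minimality of $K$ making the $f$-action a single transitive cycle of length $n_0$. Establishing transitivity rigorously — that no proper $f$-invariant subfamily of attracting currents can have its supports unioned into a strictly smaller quasi-attractor — is the step where the minimality hypothesis must be used most carefully.
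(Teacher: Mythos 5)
The paper itself contains no proof of Corollary \ref{co-qa}: Section \ref{sec-current} only recalls it from \cite{t-attractor} (it is the first half of Theorem \ref{th-rappel}), so your proposal has to stand on its own, and its skeleton (finiteness from Theorem \ref{th-finite}, the action of $\Lambda=d^{-s}f_*$ permuting attracting currents, minimality forcing transitivity) is indeed the natural one. The first genuine gap is the count, which you correctly flag as the main obstacle but then resolve in a direction that cannot work. Suppose the $\Lambda$-cycle of attracting currents $\tau_1,\dots,\tau_r$ near $K$ has length $r$. Each $\tau_i$ is fixed by $\Lambda_{f^{n_0}}$, so $r$ divides $n_0$, and by points \textbf{1)}--\textbf{2)} of Theorem \ref{th-finite} no further splitting occurs under iterates of $f^{n_0}$: for every multiple $N$ of $n_0$, the attracting currents for $f^{N}$ near $K$ are still exactly $\tau_1,\dots,\tau_r$. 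Hence taking ``a common multiple'' leaves the number of pieces at $r$ while the iterate grows; enlarging $n_0$ can never make the two numbers agree. The only way to obtain the statement as claimed is to \emph{descend}: set $n_0:=r$ and prove that the conclusions of Theorem \ref{th-finite} hold for the specific iterate $f^{r}$, which is not automatic from the theorem (it only asserts existence of some good iterate). This needs an argument absent from your proposal, for instance: each $\tau_i$ is $\Lambda_{f^r}$-invariant (a transitive permutation of $r$ elements is an $r$-cycle), and for $R$ continuous in $\Cc_p(U_{K_i})$, writing $n=qm+j$ with $N=rq$, one has $\Lambda_{f^r}^{n}R=\Lambda_{f^r}^{j}\bigl(\Lambda_{f^{N}}^{m}R\bigr)\to\Lambda_{f^r}^{j}\tau_i=\tau_i$ by weak continuity of push-forward, so $\tau_i$ is attracting for $f^{r}$ on $U_{K_i}$.

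The second gap concerns the sets $K_i$ and the identity $K=K_1\cup\cdots\cup K_{n_0}$. You define $K_i:=K\cap\supp(\tau_i)$ and assert this is ``equivalently'' $\cap_{n\geq0}(f^{n_0})^n(U_{K_i})$; these are different objects, the latter being an $f^{n_0}$-attracting set that contains $\supp(\tau_i)$, in general strictly. With your definition, $K=\bigcup_i K_i$ is precisely the claim that a minimal quasi-attractor equals the union of the supports of its attracting currents, which is established nowhere (this paper only obtains $\supp(\tau)\subset K$, in the proof of Theorem \ref{th-main}) and does not follow from transitivity or extremality. Likewise, disjointness of the pieces does not follow from a ``convex combination argument'': distinct extremal, mutually singular currents can have overlapping or even equal supports, so extremality in $\Dc_p(U)$ gives no set-theoretic separation. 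What is actually needed --- both here and to make your transitivity step rigorous --- is the bridge from invariant families of currents to quasi-attractors: build around each $\supp(\tau_i)$ an $f^{n_0}$-trapping region compatible with the attracting sets defining $K$, let $K_i$ be the resulting $f^{n_0}$-quasi-attractor, check that $\bigcup_i K_i$ is an $f$-quasi-attractor contained in $K$, and only then invoke minimality of $K$ to get equality; disjointness then comes from minimality of each $K_i$ (the intersection of two trapping regions is trapping). This is exactly how minimality is exploited in this paper's own proof of Theorem \ref{th-main}, where the quasi-attractor $K'=\cap_i A_{i,j(i)}$ is constructed and minimality yields $K'=K$. Without that bridge, your assertions ``a proper invariant subfamily would produce a proper sub-quasi-attractor'' and ``$K$ is covered by the $K_i$'' are conclusions, not proofs.
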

Finally, we will also use the following corollary about equilibrium measures associated to quasi-attractors.
\begin{corollary}\label{co-measure}
Let $A$ be a quasi-attractor of dimension $s$ and codimension $p:=k-s.$ There exists at least one attracting current $\tau$ in $\Cc_p(U)$ and the measure $\nu_\tau:=\tau\wedge T^s$ is an ergodic measure of entropy $s\log d$ supported in $\Jc_s\setminus\Jc_{s+1}$ and it has at least $s$ positive Lyapunov exponents. Moreover, if $n_0$ is the integer defined in Theorem \ref{th-finite} then $\nu_\tau$ has at most $n_0$ ergodic components with respect to $f^{n_0},$ each of which is mixing.
\end{corollary}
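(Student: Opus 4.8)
The plan is to manufacture the current $\tau$ from the existence theorem recalled above and then to read off the elementary properties of $\nu_\tau:=\tau\wedge T^s$ by hand, reserving the genuinely dynamical assertions (entropy, Lyapunov exponents, ergodicity) for the intersection-theoretic machinery of \cite{t-attractor}. First I would fix a trapping region $U$ of codimension $p=k-s$ with $A=\cap_{n\ge0}f^n(U)$; such a $U$ exists because $A$ has dimension $s$, and it satisfies $U\cap\Jc_{s+1}=\varnothing$ by the characterization of dimension. Since $\Cc_p(U)\neq\varnothing$, the existence theorem provides a sub-trapping region $D_\tau\subset U$ and a current $\tau\in\Cc_p(D_\tau)\subset\Cc_p(U)$ attracting on $D_\tau$, the required attracting current. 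The product $\nu_\tau:=\tau\wedge T^s$ is a well-defined positive measure because $T$ has continuous local potentials, so it may be wedged against the positive closed current $\tau$. Its mass is the cohomological number $\langle\tau\wedge T^s,1\rangle=\langle\tau,T^s\rangle$; as $H^{p,p}(\Pb^k)$ is one-dimensional and $\|\tau\|=\langle\tau,\omega^s\rangle=1$ forces $\{\tau\}=\{\omega^p\}$, while $\{T\}=\{\omega\}$, this number equals $\int_{\Pb^k}\omega^k=1$, so $\nu_\tau$ is a probability measure. For the support, $\supp\nu_\tau\subset\supp\tau\cap\supp T^s\subset U\cap\Jc_s$, and $U\cap\Jc_{s+1}=\varnothing$ gives $\supp\nu_\tau\subset\Jc_s\setminus\Jc_{s+1}$. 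Invariance is formal: from $f^*T=dT$ one has $T^s=d^{-s}f^*T^s$, and from $\Lambda\tau=\tau$ one has $f_*\tau=d^s\tau$, so the projection formula yields $f_*\nu_\tau=d^{-s}f_*(\tau\wedge f^*T^s)=d^{-s}(f_*\tau)\wedge T^s=\tau\wedge T^s=\nu_\tau$.

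Next come the dynamical properties, which I would obtain from the fact that $T^s$ encodes expansion in $s$ independent directions. The de Th\'elin lower bound for entropies of intersections with powers of the Green current gives $h(\nu_\tau)\ge s\log d$ and produces at least $s$ Lyapunov exponents bounded below by $\tfrac12\log d$. For the reverse inequality I would use that $\nu_\tau$ avoids $\Jc_{s+1}$, which (as in \cite{t-attractor}) bounds the number of positive Lyapunov exponents by $s$; since every exponent of a degree-$d$ endomorphism is at most $\log d$, Ruelle's inequality gives $h(\nu_\tau)\le s\log d$. Hence $h(\nu_\tau)=s\log d$ and $\nu_\tau$ has exactly $s$ positive exponents. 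Ergodicity I would extract from the extremality characterization in Theorem \ref{th-finite}: after passing to the iterate, an attracting current that is extremal in $\Dc_p(U)$ corresponds under $\tau\mapsto\tau\wedge T^s$ to a mixing measure, through the dictionary between extremal attracting currents and ergodic measures developed in \cite{t-attractor}. Finally, invoking the integer $n_0$ of Theorem \ref{th-finite}, the attracting current splits into the $n_0$ pieces of Corollary \ref{co-qa}, each giving a mixing measure for $f^{n_0}$; hence $\nu_\tau$ is ergodic for $f$ but decomposes into at most $n_0$ mixing components under $f^{n_0}$, as in Theorem \ref{th-rappel}.

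The soft part above (existence, mass, support, invariance) is essentially formal, so the main obstacle is the cluster of ergodic-theoretic facts: the sharp equality $h(\nu_\tau)=s\log d$, which needs the two-sided entropy estimate, and the mixing property, which rests on the dictionary between extremal attracting currents and ergodic measures. Both require the full apparatus of super-potentials and structure theorems from \cite{t-attractor}, and it is there that I expect essentially all of the work to lie.
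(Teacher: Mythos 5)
First, a point of context: the paper itself contains no proof of this corollary. Section \ref{sec-current} is a compilation of results recalled from \cite{t-attractor} (the text even presents Theorem \ref{th-rappel} as ``a combination of Corollary \ref{co-qa} with Corollary \ref{co-measure}''), so the only ``proof'' in the paper is a citation, and your attempt has to be measured against what \cite{t-attractor} and \cite{d-attractor} actually do. Your soft steps are correct and essentially optimal: existence of $\tau$ from the recalled existence theorem, well-definedness of $\tau\wedge T^s$ via the H\"older continuous local potentials of $T$, the cohomological mass computation, the support statement, and invariance via the projection formula. One inaccuracy in the setup: a quasi-attractor $A$ is a \emph{decreasing intersection} of attracting sets, so in general there is no trapping region $U$ with $A=\cap_{n\geq0}f^n(U)$; what one must (and can) say is that the attracting sets approximating $A$ eventually have dimension exactly $s$, which produces a codimension-$p$ trapping region $U$ containing $A$ to which the existence theorem applies.

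The genuine gap is your entropy upper bound. You argue: $\nu_\tau$ avoids $\Jc_{s+1}$, hence has at most $s$ positive exponents; every exponent of a degree-$d$ endomorphism is at most $\log d$; Ruelle then gives $h(\nu_\tau)\leq s\log d$. Both ingredients are false. Lyapunov exponents of invariant measures are \emph{not} bounded by $\log d$: already for $z^2+c$ on $\Pb^1$ with $c$ very negative, the fixed point has multiplier $1+\sqrt{1-4c}$ of arbitrarily large modulus, and the point mass there is an ergodic invariant measure with exponent far exceeding $\log 2$. And ``supported off $\Jc_{s+1}$ implies at most $s$ positive exponents'' is not a result of \cite{t-attractor} and fails in general: there are endomorphisms of $\Pb^2$ with repelling fixed points outside $\Jc_2=\supp\mu$ (Hubbard--Papadopol, Forn{\ae}ss--Sibony; see also \cite{ds-lec}), whose point masses are ergodic, avoid $\Jc_2$, and have two positive exponents. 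The mechanism actually used in \cite{d-attractor}, \cite{daurat} and \cite{t-attractor} is a Gromov-type volume-growth estimate: since the trapping region $U$ is disjoint from $\Jc_{s+1}$, the mass of $f^{n*}\omega^{j}$ on $U$ is $o(d^{(s+1)n})$ for $j\geq s+1$, so the volume of the iterated graphs over $U$ grows like $d^{sn}$, giving $h_{top}(f,U)\leq s\log d$, and then the variational principle. Your lower bound is also miswired: de Th\'elin's theorem does not produce the entropy bound and the exponents simultaneously; it \emph{converts} the bound $h(\nu_\tau)\geq s\log d$ (itself a substantial, separately proved part of \cite{t-attractor}) into the existence of at least $s$ exponents $\geq\frac{1}{2}\log d$. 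Finally, the ``dictionary'' you invoke for mixing is really the convergence $\Lambda^n R\to\tau$ of Theorem \ref{th-finite} \textbf{2)}, which yields decay of correlations by the standard computation; extremality by itself is not an argument for mixing, as Remark \ref{rk-ext} already warns.
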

Observe that Theorem \ref{th-rappel} is a combination of Corollary \ref{co-qa} with Corollary \ref{co-measure}.

\section{Consequences on the chain recurrent set}\label{sec-proof}
In this section, we prove Theorem \ref{th-main} and Theorem \ref{th-non-min-max}. The former is a consequence of the following statement.
\begin{theorem}\label{th-tau-connexe}
Let $U$ be a trapping region of codimension $p.$ Let $\tau\in\Cc_p(U)$ be an attracting current for all iterates $f^n,$ $n\geq1.$ Then the support of $\tau$ is connected.
\end{theorem}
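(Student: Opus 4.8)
The plan is to argue by contradiction and to exhibit, if $\supp\tau$ is disconnected, a nontrivial convex decomposition of $\tau$ inside $\Dc_p(U)$, which is impossible once we know $\tau$ is an extremal point of $\Dc_p(U)$. So suppose $\supp\tau=F_1\sqcup F_2$ with $F_1,F_2$ nonempty, compact and disjoint, and choose a smooth $\chi$ on $\Pb^k$ equal to $1$ on a neighbourhood $W_1\supseteq F_1$ and to $0$ on a neighbourhood $W_2\supseteq F_2$, with $W_1\cap W_2=\varnothing$. Writing $s:=k-p$, I first record two facts. Since $\tau$ is attracting, applying $\Lambda$ to the defining Ces\`aro limit (or using Theorem \ref{th-finite} \textbf{2)}) gives $\Lambda\tau=\tau$, hence $\Lambda^n\tau=\tau$ for all $n$. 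Second, because $f$ is a finite holomorphic map it collapses no positive-dimensional set, so there is no cancellation in $f_*$ and $\supp(f_*\tau)=f(\supp\tau)$; combined with $f_*\tau=d^s\tau$ this yields the forward invariance $f(\supp\tau)=\supp\tau$, and thus $f^n(\supp\tau)=\supp\tau\subset W_1\cup W_2$ for every $n$. Setting $\tau_i:=\chi\tau$ and $(1-\chi)\tau$, each $\tau_i$ is positive of bidegree $(p,p)$ and closed (for $d(\chi\tau)=d\chi\wedge\tau=0$, as $d\chi$ vanishes on $W_1\cup W_2\supseteq\supp\tau$), with $\supp\tau_i\subset F_i$ and $\|\tau_i\|>0$; I write $\hat\tau_i:=\tau_i/\|\tau_i\|\in\Cc_p(U)$, so that $\|\tau_1\|+\|\tau_2\|=1$.

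The crucial step, and the heart of the argument, is to show that each $\hat\tau_1,\hat\tau_2$ already lies in $\Dc_p(U)$. This is where invariance is used. For every $n\geq1$ the projection formula $(f^n)_*((\chi\circ f^n)\tau)=\chi\,(f^n)_*\tau$ gives, as an identity of currents,
\begin{equation*}
\tau_1=\chi\tau=\chi\,\Lambda^n\tau=\frac{1}{d^{sn}}\,\chi\,(f^n)_*\tau=\frac{1}{d^{sn}}(f^n)_*\big((\chi\circ f^n)\tau\big)=\Lambda^n\big((\chi\circ f^n)\tau\big).
\end{equation*}
Now, by the forward invariance above, $f^n(\supp\tau)=\supp\tau\subset W_1\cup W_2$, so on $\supp\tau$ the function $\chi\circ f^n$ is the indicator of the clopen set $E_n:=(f^n)^{-1}(F_1)\cap\supp\tau$; consequently $(\chi\circ f^n)\tau=\tau|_{E_n}$ is again positive and \emph{closed} (the relevant cutoff is locally constant along $\supp\tau$), with support in $\supp\tau\subset U$. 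Since $\Lambda$ preserves mass, $\|\tau|_{E_n}\|=\|\tau_1\|$, so $S_n:=\tau|_{E_n}/\|\tau_1\|\in\Cc_p(U)$ and $\Lambda^nS_n=\hat\tau_1$. As this holds for every $n$, the definition of $\Dc_p(U)$ gives $\hat\tau_1\in\Dc_p(U)$, and symmetrically $\hat\tau_2\in\Dc_p(U)$.

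It remains to invoke extremality. Because $\tau$ is attracting for all iterates, I may replace $f$ by the iterate $f^{n_0}$ of Theorem \ref{th-finite}: this changes neither $\supp\tau$, nor $f(\supp\tau)=\supp\tau$, nor the invariance $\Lambda^{n_0}\tau=\tau$, and the construction of the previous paragraph runs verbatim with $\Lambda$ replaced by $\Lambda^{n_0}$, still placing $\hat\tau_1,\hat\tau_2$ in $\Dc_p(U)$. For $f^{n_0}$, Theorem \ref{th-finite} \textbf{3)} says $\tau$ is an extremal point of $\Dc_p(U)$. But $\tau=\|\tau_1\|\,\hat\tau_1+\|\tau_2\|\,\hat\tau_2$ is a convex combination, with weights in $(0,1)$, of two elements of $\Dc_p(U)$, so extremality forces $\hat\tau_1=\hat\tau_2=\tau$. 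This is absurd, since $\supp\hat\tau_1\subset F_1$ is a proper subset of $\supp\tau=F_1\cup F_2$. Hence $\supp\tau$ is connected.

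The delicate point is entirely the second paragraph: realizing each clopen piece $\hat\tau_i$ as $\Lambda^n$ of a mass-one positive \emph{closed} current for \emph{every} $n$ — that is, as an element of $\Dc_p(U)$ — which rests on the invariance $\Lambda^n\tau=\tau$, the projection formula, and the closedness of the restrictions $\tau|_{E_n}$. The one supporting fact I would need to verify with a little care is the forward invariance $f(\supp\tau)=\supp\tau$, i.e. the absence of cancellation in $f_*\tau$, which follows from the finiteness of $f$. Finally, the passage to an iterate is not cosmetic: as Remark \ref{rk-ext} shows, without it $\tau$ need not be extremal in $\Dc_p(U)$, and indeed an attracting cycle of period $2$ produces an attracting current for $f$ (in the Ces\`aro sense) with disconnected support — precisely the kind of current excluded by requiring $\tau$ to be attracting for \emph{all} iterates.
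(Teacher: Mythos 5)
Your proof is correct and follows essentially the same route as the paper's: both cut $\tau$ by a function that is locally constant near the invariant support, use the projection formula together with $\Lambda^n\tau=\tau$ to realize the normalized piece $\chi\tau/\|\chi\tau\|$ as $\Lambda^n$ of a closed positive current in $\Cc_p(U)$ for every $n$ (hence as an element of $\Dc_p(U)$), and then contradict the extremality of $\tau$ provided by Theorem \ref{th-finite} \textbf{3)}. The differences are cosmetic: the paper computes the masses via the invariant measure $\nu_\tau=T^s\wedge\tau$ instead of mass-invariance of $\Lambda$ on closed currents, and it phrases the conclusion as ``any nonempty compact relatively open piece of $\supp\tau$ equals $\supp\tau$'' rather than as a two-piece decomposition, while you make explicit (to your credit) the no-cancellation fact behind $f(\supp\tau)=\supp\tau$ and the passage to $f^{n_0}$, which the paper leaves implicit.
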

Notice that the assumption on $\tau$ is not really restrictive since, by Theorem \ref{th-finite}, any attracting current is an average of finitely many attracting currents for $f^{n_0}$ with this property.
\begin{proof}
Let $\Omega$ be an open subset of $U$ such that $X:=\Omega\cap\supp(\tau)$ is a non-empty compact set. Our aim is to prove that $X$ has to be equal to $\supp(\tau).$

Let $s:=k-p$ be the dimension of $U$ and recall that $\Lambda:=d^{-s}f_*.$ Let $0\leq\chi\leq1$ be a smooth function supported in $\Omega$ such that $\chi=1$ on a small neighborhood of $X.$ For $n\geq0,$ define the current
$$S_n:=\frac{(\chi\circ f^n)\tau}{\|\chi\tau\|}.$$
It is clear that these currents are positive and they are also of mass $1$ since
$$\|S_n\|=\langle S_n,T^s\rangle=\frac{\langle T^s\wedge\tau,\chi\circ f^n\rangle}{\langle T^s\wedge\tau,\chi\rangle}=1,$$
where the last equality comes from the invariance of the measure $\nu_\tau:=T^s\wedge\tau.$ Moreover,
$$\Lambda^n S_n=\frac{1}{d^{sn}}\frac{f^n_*(\chi\circ f^n\tau)}{\|\chi\tau\|}=\frac{\chi(\Lambda^n\tau)}{\|\chi\tau\|}=\frac{\chi\tau}{\|\chi\tau\|}=S_0.$$
Finally, we claim that each $S_n$ are closed. This implies that $S_0$ is in $\Dc_p(U).$ If $S_0\neq\tau$ (i.e. $\|\chi\tau\|<1$) then the same holds for
$$R_0:=\frac{(1-\chi)\tau}{\|(1-\chi)\tau\|}$$
and thus
$$\tau=\|\chi\tau\|S_0+\|(1-\chi)\tau\|R_0$$ 
which contradicts to fact that, by Theorem \ref{th-finite}, $\tau$ is extremal in $\Dc_p(U).$

It remains to prove that  $S_n$ is closed for each $n\geq0.$ Let $n\geq0$ and let $x\in\supp(S_n).$ By the definition of $S_n,$ this implies that $x\in\supp(\tau)$ and $x\in\supp(\chi\circ f^n).$ The first point gives, since $\supp(\tau)$ is invariant by $f,$ that $f^n(x)\in\supp(\tau)$ and the latter that $f^n(x)\in\supp(\chi),$ i.e. $f^n(x)\in\supp(\tau)\cap\supp(\chi)=X.$ But $\chi=1$ in a neighborhood of $X$ and $f^n$ is an open mapping thus $\chi\circ f^n=1$ in a neighborhood of $x.$ Hence, in this neighborhood $S_n$  coincides with a constant times $\tau$ and thus is closed.
\end{proof}
We can now deduce Theorem \ref{th-main} using that minimal chain recurrence classes correspond exactly to minimal quasi-attractors.
\begin{proof}[Proof of Theorem \ref{th-main}]
By Corollary \ref{co-qa}, if we replace $f$ by $f^{n_0}$ then $K$ is the union of $n_0$ quasi-attractors $K_i$ each of which supports a unique attracting current satisfying the assumption of Theorem \ref{th-tau-connexe}. We will show that each $K_i$ is connected and to simplify the notations, we assume that $n_0=1$ and $K=K_i.$

Hence, $K$ is a minimal quasi-attractor of dimension $s,$ which supports an attracting current $\tau$ such that $\supp(\tau)$ is connected. By definition, $K$ is the intersection of a decreasing family of attracting sets $(A_i)_{i\geq1}.$ Each attracting set $A_i$ has finitely many connected components that we denote by $A_{i,j}.$ Since $\supp(\tau)\subset K\subset A_i$ is connected, for each $i\geq1$ there exists $j(i)$ such that $\supp(\tau)\subset A_{i,j(i)}.$ A first observation is that, since the sequence $(A_i)_{i\geq1}$ is decreasing, the same holds for $(A_{i,j(i)})_{i\geq1}.$ On the other hand, the invariance of $\supp(\tau)$ implies that $f(A_{i,j(i)})=A_{i,j(i)}$ and it is easy to deduce from this that $A_{i,j(i)}$ is an attracting set for $f.$ Hence $K':=\cap_{i\geq1}A_{i,j(i)}$ is a quasi-attractor such that $\supp(\tau)\subset K'$ and thus it has dimension $s.$ Since $K'\subset K,$ the minimality of $K$ implies that $K'=K.$ Finally, $K'$ is a decreasing intersection of connected compact sets and thus it is also connected.
\end{proof}
\begin{remark}
Notice that Corollary \ref{co-qa} and Theorem \ref{th-tau-connexe} indeed hold under the weaker assumption that $K$ is minimal in the set of dimension $s$ quasi-attractors. Hence, the above proof gives that such $K$ has finitely many connected components.
\end{remark}
In order to prove Theorem \ref{th-non-min-max}, we need the following lemma which is a direct consequence of Lemma \ref{le existence courant} and will only be used with $l=1.$
\begin{lemma}\label{le-julia}
Let $l\in\{1,\dots,k\}.$ If $x\in\Jc_l$ then the dimension of the quasi-attractor $A_x$ defined by \eqref{eq-qa} is larger or equal to $l.$
\end{lemma}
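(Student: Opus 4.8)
Looking at Lemma~\ref{le-julia}, I need to show that if $x \in \Jc_l$ then the quasi-attractor $A_x = \cap_{\epsilon > 0} A_{x,\epsilon}$ has dimension at least $l$. Let me think about what this requires.

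The statement about dimension being $\geq l$ means (by Definition~\ref{def-daurat}) that I need $\Cc_p(A_x) \neq \varnothing$ for some $p \leq k - l$, equivalently that $A_x$ carries a positive closed current of bidegree $(p,p)$ with $p = k - l$. Since dimension $s$ with codimension $p = k-s$ corresponds to $\Cc_{k-s}(A_x) \neq \varnothing$, proving dimension $\geq l$ amounts to exhibiting a nonzero positive closed current of bidegree $(k-l, k-l)$ supported on $A_x$.

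My plan is as follows. First, fix $x \in \Jc_l = \supp(T^l)$. For each $\epsilon > 0$, the set $U_{x,\epsilon}$ is an open trapping region (as recalled in Section~\ref{sec-crs}, $\overline{f(U_{x,\epsilon})} \subset U_{x,\epsilon}$), and $A_{x,\epsilon} = \cap_{n \geq 0} f^n(U_{x,\epsilon})$ is the associated attracting set. The key is to produce a current in $\Cc_{k-l}(A_{x,\epsilon})$ for every $\epsilon$. I would apply Lemma~\ref{le existence courant} to manufacture such a current: take $S = T$ (or an appropriate power) and a smooth cutoff $\chi \geq 0$ supported near $x$, and consider the limit values of $d^{-ln}(f^n)_*(\chi T)$. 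Since $x \in \Jc_l = \supp(T^l)$, I can arrange $\chi$ so that $c = \langle T \wedge T^l, \chi \rangle = \langle T^{l+1}, \chi \rangle > 0$ when $l < k$; the relevant computation uses that $x$ is in the support of $T^l$ so a neighborhood of $x$ has positive $T^l$-mass against a suitable test form. Lemma~\ref{le existence courant} then gives a nonzero positive closed $(k-l, k-l)$-current $R$ of mass $c$. The crucial point is that the push-forwards $(f^n)_*(\chi T)$ eventually have support inside $f^n(U_{x,\epsilon})$: since $x \in U_{x,\epsilon}$ and the latter is forward-trapped, the supports of $\chi \circ$ iterates localize into $A_{x,\epsilon}$, so the limit current $R$ is supported in $A_{x,\epsilon}$, giving $\Cc_{k-l}(A_{x,\epsilon}) \neq \varnothing$.

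The step I expect to be the main obstacle is passing from the attracting sets $A_{x,\epsilon}$ to the quasi-attractor $A_x = \cap_{\epsilon > 0} A_{x,\epsilon}$. Having a current in $\Cc_{k-l}(A_{x,\epsilon})$ for each $\epsilon$ does not immediately yield one in $\Cc_{k-l}(A_x)$, because the intersection could in principle lose the support. I would handle this by a normal-families / weak-compactness argument: the family $\Cc_{k-l}(\Pb^k)$ is compact (bounded mass, positive closed), so choosing a sequence $\epsilon_m \to 0$ and currents $R_m \in \Cc_{k-l}(A_{x,\epsilon_m})$ (normalized to mass one), I can extract a weak limit $R_\infty$ which is again a nonzero positive closed $(k-l,k-l)$-current. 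Since $(A_{x,\epsilon})_\epsilon$ is a decreasing family of compacts with intersection $A_x$, and supports behave upper-semicontinuously under weak limits, $\supp(R_\infty) \subset A_x$, so $R_\infty \in \Cc_{k-l}(A_x)$. This shows $A_x$ has dimension $\geq l$, as desired. The care needed is in the support localization for the push-forward and in ensuring the mass $c$ stays bounded away from zero so the normalized limit is genuinely nonzero; both follow from $x \in \Jc_l$ and the mass formula in Lemma~\ref{le existence courant}.
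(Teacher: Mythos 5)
Your overall architecture matches the paper's proof: produce a current in $\Cc_{k-l}(A_{x,\epsilon})$ for each $\epsilon>0$ via Lemma~\ref{le existence courant}, then pass to the quasi-attractor $A_x$ by weak compactness of normalized positive closed currents together with upper semicontinuity of supports along the decreasing family $(A_{x,\epsilon})_{\epsilon>0}$. That last compactness step, which you flag as the main obstacle, is fine and is exactly what the paper's terse final sentence relies on.

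However, there is a genuine error in the key step: your choice of the current $S$ to push forward. Lemma~\ref{le existence courant} requires $S\in\Cc_{k-l}(\Pb^k)$, so taking $S=T$ only makes sense when $l=k-1$, and taking ``an appropriate power'' means $S=T^{k-l}$. In either case the mass formula destroys the argument: with $S=T$ you get $c=\langle T^{l+1},\chi\rangle$, and with $S=T^{k-l}$ you get $c=\langle T^k,\chi\rangle=\langle\mu,\chi\rangle$. These are positive only if $\supp(\chi)$ meets $\Jc_{l+1}$ (resp.\ $\Jc_k$), whereas the hypothesis is only $x\in\Jc_l$. Since $\Jc_{l+1}$ can be strictly smaller than $\Jc_l$, for $x\in\Jc_l\setminus\Jc_{l+1}$ and $\chi$ supported in a small ball around $x$ you would get $c=0$, and the limit current vanishes. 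Note that this is precisely the situation the lemma is designed for: in the proof of Theorem~\ref{th-non-min-max} it is applied with $l=1$ to points that may lie in $\Jc_1\setminus\Jc_k$. The fix is the paper's choice $S=\omega^{k-l}$: then $c=\langle\omega^{k-l}\wedge T^l,\chi\rangle>0$ because the trace measure $T^l\wedge\omega^{k-l}$ has the same support as $T^l$, namely $\Jc_l\ni x$, so a cutoff equal to $1$ near $x$ has positive mass against it. A second, more minor slip: you justify the support localization by ``$x\in U_{x,\epsilon}$,'' which need not hold (an $\epsilon$-pseudo-orbit from $x$ to $x$ must have length $n\geq1$, so $x\in U_{x,\epsilon}$ only if $x$ is $\epsilon$-chain recurrent). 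The correct argument, as in the paper, is uniform continuity: choose $r>0$ so that $\dist(y,x)<r$ implies $\dist(f(y),f(x))<\epsilon$; then $f(B(x,r))\subset U_{x,\epsilon}$, hence $\supp\bigl((f^n)_*(\chi S)\bigr)\subset f^{n-1}(U_{x,\epsilon})$ and every limit value is supported in $A_{x,\epsilon}$.
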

\begin{proof}
Let $x$ be in $\Jc_l.$ Let $\epsilon>0$ and let $r>0$ such that if $\dist(y,z)<r$ then $\dist(f(y),f(z))<\epsilon.$ In particular, if $\dist(y,x)<r$ then $(x,f(y))$ is an $\epsilon$-pseudo-orbit and every limit value of $(f^n(y))_{n\geq1}$ belongs to $A_{x,\epsilon}.$ This implies that if $\chi$ is a smooth, positive cut-off function such that $\chi=1$ on the ball $B(x,r/2)$ and $\chi=0$ outside $B(x,r)$ then every limit value of $d^{-nl}f^n_*(\chi\omega^{k-l})$ is supported in $A_{x,\epsilon}.$ By Lemma \ref{le existence courant}, such limit value is a positive closed current of mass $c=\langle\omega^{k-l}\wedge T^l,\chi\rangle>0$ and thus $\Cc_{k-l}(A_{x,\epsilon})\neq\varnothing.$

Since this holds for each $\epsilon>0,$ it gives that $\Cc_{k-l}(A_x)\neq\varnothing$ and $\dim(A_x)\geq l.$
\end{proof}

\begin{proof}[Proof of Theorem \ref{th-non-min-max}]
Let $K$ be a chain recurrence class of $f$ and let $x$ be an element of $K.$ The proof of the point \textbf{1)} is elementary.

Assume that $K$ is included in the Fatou set. Since $K$ is compact, it must intersect a finite number of Fatou components and by invariance of $K,$ $f$ acts by permutation on them. Hence, by exchanging $f$ by an iterate, we can assume that $K$ is contained in an invariant Fatou component $\Omega.$ If $K$ is minimal with respect to $\succ$ then by Theorem \ref{th-rappel} it has to be equal to a sink. Otherwise, there exists another class $L$ such that $K\succ L.$ Since $K$ is invariant, it possesses two open neighborhoods, $N_1$ and $N_2,$ such that
$$\overline{N_1\cup f(N_1)}\subset N_2\ \text{ and }\ \overline{N_2}\subset\Omega\setminus L.$$
Hence, if $\epsilon>0$ is small enough then an $\epsilon$-pseudo-orbit from $K$ to $L$ must pass through $\overline{N_2}\setminus N_1$ and the compactness of this set implies that there is $y\in\overline{N_2}\setminus N_1$ such that $x\succ y.$ Moreover, $y$ is not in $K$ so there exists $\epsilon>0$ such that there is no $\epsilon$-speudo-orbit between $y$ and $K.$ Hence, $y$ is in the basin $B(A_{y,\epsilon})$ of the attracting set $A_{y,\epsilon}$ (defined by \eqref{eq-as}) and $K$ is disjoint from this basin. By connectedness, the Fatou component $\Omega$ must intersect the boundary of $B(A_{y,\epsilon}).$ This contradicts the fact that $\Omega$ is in the Fatou set.

Now, in order to prove \textbf{2)}, assume that $K$ is neither maximal nor minimal. In particular, it is not equal to the class $[\mu]$ associated to the equilibrium measure $\mu$ and, by \textbf{1)}, it must intersect the Julia set $\Jc_1.$ Hence, by Lemma \ref{le-julia}, the dimension $s$ of the associated quasi-attractor $A_x$ to $x\in K$ satisfies $1\leq s\leq k-1.$ Therefore, Corollary \ref{co-measure} gives a measure $\nu$ of entropy $s\log d$ supported on the intersection of $A_x$ with $\Jc_s\setminus\Jc_{s+1}.$ In particular, $K\succ[\nu].$ The proof of \textbf{3)} follows in the same way since we have only used that $K$ is different from $[\mu]$ or a sink.

Finally, assume that $\Rc(f)$ is a union of the class $[\mu]$ with sinks. Since each quasi-attractor of dimension $s$ gives rise to a chain recurrence class of entropy $s\log d$ by Corollary \ref{co-measure}, this assumption on $\Rc(f)$ implies that the only quasi-attractors of $f$ are of dimension $0$ or $k,$ i.e. sinks or $\Pb^k.$ Therefore, if $y$ is in $\Jc_1$ then by Lemma \ref{le-julia} the dimension of $A_y$ is larger or equal to $1$ thus $\dim(A_y)=k$ and $y\succ z$ for every $z$ in $\Pb^k.$ Hence, $y$ belongs to the maximal chain recurrence class $[\mu]\subset\Rc(f).$
\end{proof}


\bibliographystyle{alpha} 

\begin{thebibliography}{GvST89}

\bibitem[BBD18]{bbd-bif}
F.~Berteloot, F.~Bianchi, and C.~Dupont.
\newblock Dynamical stability and {L}yapunov exponents for holomorphic
  endomorphisms of {$\mathbb P^k$}.
\newblock {\em Ann. Sci. \'Ec. Norm. Sup\'er. (4)}, 51(1):215--262, 2018.

\bibitem[BD01]{briend-duval-carac}
J.-Y. Briend and J.~Duval.
\newblock Deux caract\'erisations de la mesure d'\'equilibre d'un endomorphisme
  de {${\rm P}\sp k(\mathbb C)$}.
\newblock {\em Publ. Math. Inst. Hautes \'Etudes Sci.}, 93:145--159, 2001.

\bibitem[BD02]{bonatti-diaz-maximal}
C.~Bonatti and L.~{D\'\i az}.
\newblock On maximal transitive sets of generic diffeomorphisms.
\newblock {\em Publ. Math. Inst. Hautes \'{E}tudes Sci.}, 96:171--197 (2003),
  2002.

\bibitem[BDM07]{bdm-elliptic}
A.~Bonifant, M.~Dabija, and J.~Milnor.
\newblock Elliptic curves as attractors in {$\mathbb P^2$}. {I}. {D}ynamics.
\newblock {\em Experiment. Math.}, 16(4):385--420, 2007.

\bibitem[BS91]{bs-2}
E.~Bedford and J.~Smillie.
\newblock Polynomial diffeomorphisms of {$\mathbb C^2$}. {II}. {S}table
  manifolds and recurrence.
\newblock {\em J. Amer. Math. Soc.}, 4(4):657--679, 1991.

\bibitem[BT17]{bianchi-t-desboves}
F.~Bianchi and J.~Taflin.
\newblock Bifurcations in the elementary {D}esboves family.
\newblock {\em Proc. Amer. Math. Soc.}, 145(10):4337--4343, 2017.

\bibitem[Con78]{conley}
C.~Conley.
\newblock {\em Isolated invariant sets and the {M}orse index}, volume~38 of
  {\em CBMS Regional Conference Series in Mathematics}.
\newblock American Mathematical Society, Providence, R.I., 1978.

\bibitem[Dau14]{daurat}
S.~Daurat.
\newblock On the size of attractors in {$\mathbb{P}\sp k$}.
\newblock {\em Math. Z.}, 277(3-4):629--650, 2014.

\bibitem[Dem12]{de-book}
J.-P. Demailly.
\newblock Complex analytic and differential geometry.
\newblock
  \href{http://www-fourier.ujf-grenoble.fr/~demailly/manuscripts/agbook.pdf}{www-fourier.ujf-grenoble.fr/\textasciitilde
  demailly/books.html}, 2012.

\bibitem[Din07]{d-attractor}
T.-C. Dinh.
\newblock Attracting current and equilibrium measure for attractors on
  {$\mathbb P\sp k$}.
\newblock {\em J. Geom. Anal.}, 17(2):227--244, 2007.

\bibitem[DS03]{ds-allupoly}
T.-C. Dinh and N.~Sibony.
\newblock Dynamique des applications d'allure polynomiale.
\newblock {\em J. Math. Pures Appl. (9)}, 82(4):367--423, 2003.

\bibitem[DS10]{ds-lec}
T.-C. Dinh and N.~Sibony.
\newblock Dynamics in several complex variables: endomorphisms of projective
  spaces and polynomial-like mappings.
\newblock In {\em Holomorphic dynamical systems}, volume 1998 of {\em Lecture
  Notes in Math.}, pages 165--294. Springer, Berlin, 2010.

\bibitem[FS95a]{fs-cdhd2}
J.-E. Forn{\ae}ss and N.~Sibony.
\newblock Complex dynamics in higher dimension. {II}.
\newblock In {\em Modern methods in complex analysis ({P}rinceton, {NJ},
  1992)}, volume 137 of {\em Ann. of Math. Stud.}, pages 135--182. Princeton
  Univ. Press, Princeton, NJ, 1995.

\bibitem[FS95b]{fs-oka}
J.-E. Forn{\ae}ss and N.~Sibony.
\newblock Oka's inequality for currents and applications.
\newblock {\em Math. Ann.}, 301(3):399--419, 1995.

\bibitem[FW99]{fw-attractor}
J.-E. Forn{\ae}ss and B.~Weickert.
\newblock Attractors in {${\mathbb P}\sp 2$}.
\newblock In {\em Several complex variables ({B}erkeley, {CA}, 1995--1996)},
  volume~37 of {\em Math. Sci. Res. Inst. Publ.}, pages 297--307. Cambridge
  Univ. Press, Cambridge, 1999.

\bibitem[GvST89]{gambaudo-vanstrien-tresser}
J.-M. Gambaudo, S.~van Strien, and C.~Tresser.
\newblock H\'{e}non-like maps with strange attractors: there exist {$C^\infty$}
  {K}upka-{S}male diffeomorphisms on {$S^2$} with neither sinks nor sources.
\newblock {\em Nonlinearity}, 2(2):287--304, 1989.

\bibitem[Rue89]{ruelle-book}
D.~Ruelle.
\newblock {\em Elements of differentiable dynamics and bifurcation theory}.
\newblock Academic Press Inc., Boston, MA, 1989.

\bibitem[Taf18]{t-attractor}
J.~Taflin.
\newblock Attracting currents and equilibrium measures for quasi-attractors of
  {$\mathbb P^k$}.
\newblock {\em Invent. Math.}, 213:83--137, 2018.

\bibitem[Ued94]{ueda-fatou}
T.~Ueda.
\newblock Fatou sets in complex dynamics on projective spaces.
\newblock {\em J. Math. Soc. Japan}, 46(3):545--555, 1994.

\end{thebibliography}

\noindent {\footnotesize Johan Taflin}\\
{\footnotesize Universit\'e de Bourgogne-Franche-Comté}\\
{\footnotesize IMB, CNRS UMR 5584}\\
{\footnotesize Facult\'e des Sciences Mirande}\\
{\footnotesize 9 avenue Alain Savary}\\
{\footnotesize F-21000 Dijon, France}\\
{\footnotesize johan.taflin@u-bourgogne.fr}\\

\end{document}